\documentclass[11pt,a4paper,dvipsnames]{article}

\usepackage[top=24mm, bottom=30mm, right=38mm, left=38mm]{geometry}
\usepackage[T1]{fontenc}
\usepackage[utf8]{inputenc}
\usepackage[english]{babel}

\usepackage{newtxtext}
\usepackage{microtype}
\usepackage{amsmath,amsthm,mathtools,empheq}
\usepackage{newtxmath}
\allowdisplaybreaks[4]
\usepackage{bm}
\usepackage{needspace}
\usepackage{stmaryrd}
\usepackage{cases}
\numberwithin{equation}{section}
\usepackage{titling}
\usepackage{xcolor}
\usepackage{algorithm}
\usepackage{algpseudocode}
\floatname{algorithm}{\sffamily Algorithm}
\usepackage{enumitem}
\usepackage[numbers,sort]{natbib}
\usepackage[bf,sf]{titlesec}
\usepackage{caption}
\usepackage{aliascnt}

\definecolor{ForestGreen}{rgb}{0.13, 0.55, 0.13}
\definecolor{MidnightBlue}{rgb}{0.098, 0.098, 0.439}

\usepackage{xurl}
\usepackage{hyperref}
\hypersetup{
    pdftitle      = {An optimal first-order method for smooth and strongly convex composite optimization and its stationary limit},
    pdfauthor     = {Manu Upadhyaya, Daniel Berg Thomsen, Aymeric Dieuleveut, and Adrien B. Taylor},
    colorlinks    = true,
    urlcolor      = MidnightBlue,
    linkcolor     = MidnightBlue,
    citecolor     = ForestGreen,
}
\newcommand{\doi}[1]{\href{https://doi.org/#1}{doi: \nolinkurl{#1}}}
\newcommand{\arxiv}[2][]{\href{https://arxiv.org/abs/#2}{arXiv:\nolinkurl{#2}\if\relax\detokenize{#1}\relax\else{} [#1]\fi}}
\usepackage[capitalize,nameinlink]{cleveref}
\renewcommand{\thealgorithm}{\arabic{algorithm}}

\makeatletter
\renewcommand{\theHALG@line}{\thealgorithm.\arabic{ALG@line}}
\makeatother
\newcommand{\algorithmkeyword}[1]{\textsf{\textbf{#1}}}
\algrenewcommand\algorithmicrequire{\algorithmkeyword{Input:}}
\algnewcommand\algorithmicinitialization{\algorithmkeyword{Initialization:}}
\algnewcommand\Initialization{\item[\algorithmicinitialization]}
\algrenewcommand\alglinenumber[1]{\footnotesize\color{black!55}#1}
\newcommand{\algnamefont}{\ttfamily}
\newcommand{\ProxITEM}{{\hyperref[alg:prox-item]{\algnamefont Prox-ITEM}}}
\newcommand{\refProxITEM}{\ProxITEM\ (\Cref{alg:prox-item})}
\newcommand{\ProxTMM}{{\hyperref[alg:prox-tmm]{\algnamefont Prox-TMM}}}
\newcommand{\refProxTMM}{\ProxTMM\ (\Cref{alg:prox-tmm})}

\let\mathbf\bm

\newcommand{\set}[1]{\mathord{\left.\{#1\}\right.}}
\newcommand{\Bigset}[1]{\mathord{\left\{#1\right\}}}
\newcommand{\p}[1]{\mathord{(#1)}}
\newcommand{\Bigp}[1]{\mathord{\left(#1\right)}}

\newcommand{\Bignorm}[1]{\left\lVert#1\right\rVert}
\newcommand{\norm}[1]{\lVert#1\rVert}
\newcommand{\Biginner}[2]{\left\langle #1, #2 \right\rangle}
\newcommand{\inner}[2]{\langle #1, #2 \rangle}
\newcommand{\xmiddle}[1]{\;\middle#1\;}

\newcommand{\Prox}{\operatorname{Prox}}
\DeclareMathOperator*{\minimize}{minimize}
\DeclareMathOperator*{\argmin}{argmin}
\DeclareMathOperator{\dom}{dom}

\renewcommand{\leq}{\leqslant}
\renewcommand{\geq}{\geqslant}

\newcommand{\reals}{\mathbb{R}}
\newcommand{\N}{\mathbb{N}}
\newcommand{\naturals}{\mathbb{N}_0}
\newcommand{\calH}{\mathcal{H}}

\newif\ifsiopt
\sioptfalse
\newcommand{\sioptcite}[2]{\ifsiopt\cite{#2}\else\cite{#1}\fi}

\providecommand{\MSC}[1]{\textbf{\textsf{Mathematics subject classification 2020.}} #1}
\providecommand{\keywords}[1]{\textbf{\textsf{Keywords.}} #1}

\makeatletter
\renewenvironment{abstract}{%
  \if@twocolumn
    \section*{{\sffamily\bfseries \abstractname}}%
  \else
    \small
    \begin{center}%
      {\sffamily\bfseries \abstractname\vspace{-.5em}\vspace{\z@}}%
    \end{center}%
    \begin{quote}%
    \setlength{\parindent}{0pt}%
    \noindent\ignorespaces
  \fi}
  {\if@twocolumn\else\end{quote}\fi}
\makeatother

\newtheoremstyle{myStyle1}%
  {0.3cm}%
  {0.3cm}%
  {\itshape}%
  {}%
  {}%
  {:}%
  {.5em}%
  {%
    \thmname{\normalfont\sffamily\bfseries #1 }%
    \thmnumber{\normalfont\sffamily\bfseries #2}%
    \thmnote{\normalfont\sffamily\bfseries \ (#3)}%
  }

\newtheoremstyle{myStyle2}%
  {0.3cm}%
  {0.3cm}%
  {}%
  {}%
  {}%
  {:}%
  {.5em}%
  {%
    \thmname{\normalfont\sffamily\bfseries #1 }%
    \thmnumber{\normalfont\sffamily\bfseries #2}%
    \thmnote{\normalfont\sffamily\bfseries \ (#3)}%
  }
\theoremstyle{myStyle1}
\newtheorem{theorem}{Theorem}[section]
\newaliascnt{proposition}{theorem}
\newtheorem{proposition}[proposition]{Proposition}
\aliascntresetthe{proposition}
\newaliascnt{lemma}{theorem}
\newtheorem{lemma}[lemma]{Lemma}
\aliascntresetthe{lemma}
\newaliascnt{assumption}{theorem}
\newtheorem{assumption}[assumption]{Assumption}
\aliascntresetthe{assumption}
\newlist{enumeratass}{enumerate}{1}
\setlist[enumeratass]{nosep, label={(\roman*)}, ref=\theassumption(\roman*)}
\newaliascnt{definition}{theorem}
\newtheorem{definition}[definition]{Definition}
\aliascntresetthe{definition}
\theoremstyle{myStyle2}
\newaliascnt{remark}{theorem}
\newtheorem{remark}[remark]{Remark}
\aliascntresetthe{remark}
\newlist{enumeratremark}{enumerate}{1}
\newcommand{\remarklistbreak}{\leavevmode\par}
\setlist[enumeratremark]{label={(\roman*)}, ref=\theremark(\roman*), before=\remarklistbreak}

\AddToHook{env/theorem/begin}{\crefalias{section}{theorem}}
\AddToHook{env/proposition/begin}{\crefalias{theorem}{proposition}}
\AddToHook{env/lemma/begin}{\crefalias{theorem}{lemma}}
\AddToHook{env/assumption/begin}{\crefalias{theorem}{assumption}}
\AddToHook{env/definition/begin}{\crefalias{theorem}{definition}}
\AddToHook{env/remark/begin}{\crefalias{theorem}{remark}}

\crefname{theorem}{theorem}{theorems}
\Crefname{theorem}{Theorem}{Theorems}
\crefname{proposition}{proposition}{propositions}
\Crefname{proposition}{Proposition}{Propositions}
\crefname{lemma}{lemma}{lemmas}
\Crefname{lemma}{Lemma}{Lemmas}
\crefname{assumption}{assumption}{assumptions}
\Crefname{assumption}{Assumption}{Assumptions}
\crefname{enumeratassi}{assumption}{assumptions}
\Crefname{enumeratassi}{Assumption}{Assumptions}
\crefname{definition}{definition}{definitions}
\Crefname{definition}{Definition}{Definitions}
\crefname{remark}{remark}{remarks}
\Crefname{remark}{Remark}{Remarks}
\crefname{algorithm}{algorithm}{algorithms}
\Crefname{algorithm}{Algorithm}{Algorithms}
\crefname{enumeratremarki}{remark}{remarks}
\Crefname{enumeratremarki}{Remark}{Remarks}

\title{\sffamily\bfseries An optimal first-order method for smooth and strongly convex composite optimization \\ and its stationary limit \\[2ex]}
\author{%
    \begin{tabular}{@{}c@{\hspace{2.5em}}c@{}}
        Manu Upadhyaya$^{1,2}$ &
        Daniel Berg Thomsen$^{1,2}$ \\
        {\small\texttt{\href{mailto:manu.upadhyaya@inria.fr}{manu.upadhyaya@inria.fr}}} &
        {\small\texttt{\href{mailto:daniel.berg-thomsen@inria.fr}{daniel.berg-thomsen@inria.fr}}} \\
        \\[-0.2ex]
        Aymeric Dieuleveut$^{2}$ &
        Adrien B. Taylor$^{1}$ \\
        {\small\texttt{\href{mailto:aymeric.dieuleveut@polytechnique.edu}{aymeric.dieuleveut@polytechnique.edu}}} &
        {\small\texttt{\href{mailto:adrien.taylor@inria.fr}{adrien.taylor@inria.fr}}}
    \end{tabular}
    \\[5.5ex]
    {\small $^{1}$Inria, D.I. ENS, CNRS, PSL Research University, Paris, France}\\[0.5ex]
    {\small $^{2}$CMAP, \'Ecole Polytechnique, Institut Polytechnique de Paris, Palaiseau, France}
}
\date{}

\begin{document}

\maketitle


\begin{abstract}
    We introduce \ProxITEM{}, an optimal proximal-gradient method for minimizing \(f+g\), where \(f\) is smooth and strongly convex, and \(g\) is convex, proper, and lower semicontinuous. In the smooth case \(g=0\), \ProxITEM{} reduces to the information-theoretic exact method (ITEM). We prove an exact distance-to-solution bound for \ProxITEM{} with the same distance-convergence rate as ITEM, and show that this rate is minimax optimal among span-based first-order methods using the same number of gradient-oracle calls for \(f\) and an arbitrary number of proximal-oracle calls for \(g\). We also identify the stationary limit of \ProxITEM{}, denoted \ProxTMM{}, which gives a proximal extension of the triple momentum method (TMM) to the composite setting and achieves the corresponding TMM distance-convergence rate.
\end{abstract}

\keywords{Composite optimization, minimax optimality, Lyapunov analysis}

\MSC{
    65K05, 
    68Q25, 
    90C25, 
    93D30  
}


\section{Introduction}\label{sec:introduction}

Composite optimization problems arise when the objective naturally separates into one smooth term and one term that carries additional structure, such as constraints, sparsity-promoting regularization, or other nonsmooth penalties.
In particular, we consider
\begin{align}\label{eq:problem_class}
    \minimize_{x \in \calH}\;
    f\p{x} + g\p{x},
\end{align}
and throughout this paper, we make the following assumptions.
\begin{assumption}
    \phantomsection\label{ass:main}
    \begin{enumeratass}
        \item\label{ass:main:hilbert}
        \(\p{\calH,\inner{\cdot}{\cdot}}\) is a real Hilbert space where \(\norm{\cdot}\) is the canonical norm.
        \item\label{ass:main:f}
        The function \(f:\calH\to\reals\) is \(\mu\)-strongly convex and \(L\)-smooth, where \(0<\mu<L<+\infty\).
        \item\label{ass:main:g}
        The function \(g:\calH\to\reals\cup\{+\infty\}\) is convex, proper, and lower semicontinuous.
    \end{enumeratass}
\end{assumption}

The main question addressed here is whether the convergence rate of
the infor\-mation-theoretic exact method (ITEM) \cite[Algorithm~1]{taylor2023optimal_gradient} for smooth and strongly convex
minimization can be
retained in the composite setting \eqref{eq:problem_class} by a first-order
method that accesses \(f\) through gradient-oracle calls and \(g\) through
proximal-oracle calls.
We answer this question by introducing \refProxITEM{}, a proximal extension of
ITEM for the composite setting, and proving the exact bound
\begingroup
\ifsiopt
\setlength{\abovedisplayskip}{0.35\baselineskip}
\setlength{\abovedisplayshortskip}{0.35\baselineskip}
\setlength{\belowdisplayskip}{0.35\baselineskip}
\setlength{\belowdisplayshortskip}{0.35\baselineskip}
\fi
\[
    \p{\forall k\in\naturals}\qquad
    \norm{z^k-x^{\star}}^2
    \leq
    \frac{1}{1+qA_k}\norm{x^0-x^{\star}}^2,
\]
\endgroup
where \(x^{\star}\in\argmin_{x\in\calH}\p{f\p{x} + g\p{x}}\).

\begin{algorithm}[t]
    \caption{\ProxITEM}
    \label{alg:prox-item}
    \begin{algorithmic}[1]
        \Require \(f\in\mathcal{F}_{\mu,L}\p{\mathcal{H}}\) with \(0<\mu<L<+\infty\), \(g\in\mathcal{F}_{0,\infty}\p{\mathcal{H}}\), and \(x^0 \in \mathcal{H}\).
        \Initialization \(q = \mu/L\), \(A_0 = 0\), and \(z^0 = x^0\).
        \item[\algorithmkeyword{For}] \(k=0,1,2,\dots\)
            \begingroup
            \setlength{\abovedisplayskip}{0.35\baselineskip}
            \setlength{\abovedisplayshortskip}{0.35\baselineskip}
            \setlength{\belowdisplayskip}{0.35\baselineskip}
            \setlength{\belowdisplayshortskip}{0.35\baselineskip}
            \begin{align}
                A_{k+1}
                &=
                \frac{\p{1+q}A_k+2\p{1+\sqrt{\p{1+A_k}\p{1+qA_k}}}}{\p{1-q}^2}
                \label{eq:prox-item:A-recursion}\\
                \beta_k
                &=
                \frac{A_k}{\p{1-q}A_{k+1}}
                \label{eq:prox-item:beta}\\
                \delta_k
                &=
                \sqrt{\frac{A_{k+1}}{1+qA_{k+1}}}
                \label{eq:prox-item:delta}\\
                y^k
                &=
                \p{1-\beta_k}z^k
                + \beta_k x^k
                \label{eq:prox-item:extrapolation}\\
                \bar z^{k+1}
                &=
                \p{1-q\delta_k}z^k
                + q\delta_k y^k
                - \frac{\delta_k}{L}\nabla f\p{y^k}
                \label{eq:prox-item:prox-point}\\
                z^{k+1}
                &=
                \Prox^{\delta_k/L}_{g}\p{\bar z^{k+1}}
                \label{eq:prox-item:z-update}\\
                x^{k+1}
                &=
                y^k
                - \frac{1}{L}\nabla f\p{y^k}
                - \frac{1}{\delta_k}\p{\bar z^{k+1}-z^{k+1}}
                \label{eq:prox-item:x-update}
            \end{align}
            \endgroup
    \end{algorithmic}
\end{algorithm}

In particular, the factor \(\p{1+qA_k}^{-1}\) is the same as for ITEM \cite[Theorem~3]{taylor2023optimal_gradient}, and this rate is known to be a complexity lower bound in the smooth and strongly convex setting \cite{drori2021exact}.
Since that smooth subclass is contained in the composite model \eqref{eq:problem_class}, this gives a matching lower bound for any
span-based method using \(k\) gradient-oracle calls for \(f\) and an arbitrary number of proximal-oracle calls for \(g\).
Thus, \ProxITEM{} is minimax optimal with respect to this algorithmic class and performance measure.

The coefficients of \ProxITEM{} have a stationary limit.
In particular, \(\beta_k\to\allowbreak\p{1-\sqrt{q}}/\p{1+\sqrt{q}}\) and \(\delta_k\to\allowbreak1/\sqrt{q}\) as \(k\to\infty\).
Substituting these limiting values into \ProxITEM{} gives the stationary recursion defining \refProxTMM{}\@.
The method \ProxTMM{} is the composite counterpart of the triple momentum method (TMM) \cite{van_scoy_freeman_lynch_2018_tmm}.
We prove that its main sequence satisfies
\[
    \norm{z^k-x^{\star}}
    \in
    \mathcal{O}\p{\p{1-\sqrt{q}}^k}
    \quad\text{as}\quad k\to\infty.
\]
Both analyses are based on conceptually simple Lyapunov arguments. In particular, when
\(g=0\), the additional nonsmooth terms vanish, and the Lyapunov
function of \ProxITEM{} reduces to the Lyapunov function of ITEM
\cite[Section~2.1]{taylor2023optimal_gradient}.
Moreover, the Lyapunov function used for \ProxTMM{} is obtained as the stationary limit of the \ProxITEM{} Lyapunov function.

\ifsiopt\else
\begin{algorithm}[t]
    \caption{\ProxTMM}
    \label{alg:prox-tmm}
    \begin{algorithmic}[1]
        \Require \(f\in\mathcal{F}_{\mu,L}\p{\mathcal{H}}\) with \(0<\mu<L<+\infty\), \(g\in\mathcal{F}_{0,\infty}\p{\mathcal{H}}\), and \(x^0 \in \mathcal{H}\).
        \Initialization \(q = \mu/L\) and \(z^0 = x^0\).
        \item[\algorithmkeyword{For}] \(k=0,1,2,\dots\)
            \begingroup
            \setlength{\abovedisplayskip}{0.35\baselineskip}
            \setlength{\abovedisplayshortskip}{0.35\baselineskip}
            \setlength{\belowdisplayskip}{0.35\baselineskip}
            \setlength{\belowdisplayshortskip}{0.35\baselineskip}
            \begin{align}
                y^k
                &=
                \frac{2\sqrt{q}}{1+\sqrt{q}}z^k
                + \frac{1-\sqrt{q}}{1+\sqrt{q}}x^k \\
                \bar z^{k+1}
                &=
                \p{1-\sqrt{q}}z^k
                + \sqrt{q}y^k
                - \frac{1}{\sqrt{q}L}\nabla f\p{y^k} \\
                z^{k+1}
                &=
                \Prox^{1/\p{\sqrt{q}L}}_{g}\p{\bar z^{k+1}}
                \label{eq:prox-tmm:z-update}\\
                x^{k+1}
                &=
                y^k
                - \frac{1}{L}\nabla f\p{y^k}
                - \sqrt{q}\p{\bar z^{k+1}-z^{k+1}}
            \end{align}
            \endgroup
    \end{algorithmic}
\end{algorithm}

\pagebreak[4]
\fi

A useful point of comparison is OptISTA \cite{jang2025optista}, which is optimal in terms of function-value suboptimality for composite problems where \(f\) is smooth and convex but not necessarily strongly convex.
The relevant distinction is that the Lyapunov function used to analyze OptISTA is history-dependent, i.e., it involves past iterates, whereas our Lyapunov functions depend only on current-iterate information, leading to substantially shorter proofs.

A surprising feature of \ProxITEM{} is that the proximal operation should not be
applied to the usual point \(y^{k}-\frac{1}{L}\nabla f(y^k)\).
Rather, the proximal operation should be applied to the \(\p{\bar{z}^{k}}_{k\in\N}\)-sequence, closer in spirit to the approaches of \cite{auslender_teboulle_2006_interior_gradient,tseng_2008_accelerated_proximal_gradient}.

\ifsiopt\fi


\subsection{Related work}\label{subsec:related_work}

The problem class \eqref{eq:problem_class} is the standard setting for the proximal-gradient method, more generally known as forward-backward splitting:
the smooth term is accessed through its gradient, whereas the nonsmooth term is accessed through its proximal operator.
The method itself goes back to the operator-splitting literature
\cite{lions_mercier_1979_splitting,passty_1979_ergodic} and is now a basic tool
in convex optimization and signal recovery
\cite{beck_2017_first_order_methods,combettes_wajs_2005_signal_recovery}.
Moreover, exact worst-case analyses of the proximal-gradient method are
available in the composite setting \cite{taylor_hendrickx_glineur_2018_proximal_gradient}.
Acceleration in first-order methods traces back to the fast-gradient method and
complexity lower bounds \cite{nesterov1983method,nesterov2018lectures}. Its
composite counterparts include accelerated proximal-gradient and FISTA-type
methods \cite{beck_teboulle_2009_fista,nesterov2013_gradient_composite,tseng_2008_accelerated_proximal_gradient},
as well as related one-projection and universal variants
\cite{auslender_teboulle_2006_interior_gradient,gasnikov_nesterov_2018_universal_stochastic_composite}.
These methods provide the classical accelerated rates for function-value
suboptimality; for additional background on acceleration, see, e.g.,
\cite{daspremont_scieur_taylor_2021_acceleration_methods}.

Technically, our analysis draws on exact worst-case analysis and computer-aided
design of first-order methods. The performance-estimation problem (PEP) framework reduces
worst-case analysis of fixed-step first-order methods to finite-dimensional
semidefinite programs \cite{drori_teboulle_2014_pep}. Interpolation theory
subsequently gave exact PEP formulations for smooth and (strongly) convex
minimization \cite{taylor2016smoothstronglyconvex} and for composite
settings \cite{taylor_hendrickx_glineur_2017_composite_pep}.
The PEP methodology also led to optimized smooth convex methods such as OGM
\cite{drori2017exact_information,kim_fessler_2016_ogm,kim_fessler_2018_generalizing_ogm,kim_fessler_2021_ogm_g}.
It has also been extended using branch-and-bound techniques for nonconvex PEP design problems \cite{gupta_van_parys_ryu_2024_bnb_pep}
and via numerical design of optimized algorithms using local methods
\cite{kamri_hendrickx_glineur_2025_numerical_design}. In the composite convex
setting, OptISTA \cite{jang2025optista} gives an optimal method for
function-value suboptimality, and recent reduction approaches transfer optimized
smooth methods to the composite setting, including proximal versions of OGM and
Silver stepsize schedules
\sioptcite{altschuler_parrilo_2025_silver_smooth,altschuler_parrilo_2025_stepsize_hedging_i,bok_altschuler_2025_composite_reduction,bok_altschuler_2025_silver_proxgd}{altschuler_parrilo_2025_stepsize_hedging_i,altschuler_parrilo_2025_silver_smooth,bok_altschuler_2025_silver_proxgd,bok_altschuler_2025_composite_reduction}.
Related long-step schedules for smooth gradient descent, analyzed through
multi-step rather than per-iteration descent arguments, were developed in \cite{grimmer_2024_long_steps}.
For quadratic objectives, the link between optimal linear rates and minimax
polynomials is classical, going back to Chebyshev semi-iterative methods
\cite{golub_varga_1961_chebyshev_i,golub_varga_1961_chebyshev_ii,varga2000matrix_iterative_analysis}.
ITEM extends this optimality picture from quadratic objectives to the full class
of smooth and strongly convex functions, with an exact rate and a matching lower
bound \cite{drori2021exact,taylor2023optimal_gradient}.

Previous attempts to extend ITEM to the composite setting include \cite{florea_2024_adaptive_first_order,ushiyama_2025_sqrt2_accelerated_fista}.
However, these methods do not achieve the same rate as ITEM\@.
Moreover, projected counterparts of unconstrained strongly convex methods have also been designed to
preserve linear rate bounds under quadratic Lyapunov certificates
\cite{li_lestas_nagahara_2025_projected_same_rates}, but are not applicable in our setting.

Finally, there is a related but distinct literature on designing optimal and accelerated proximal-point, fixed-point, and monotone-inclusion methods. Halpern iterations and their
optimized variants provide tight residual guarantees for fixed-point problems
involving nonexpansive or Lipschitz mappings
\cite{bravo_cominetti_lee_2026_minimax_halpern,contreras_cominetti_2023_optimal_error_bounds,halpern1967fixed_points,lieder2021halpern,park_ryu_2022_exact_fixed_point}.
Accelerated proximal-point methods, including classical resolvent-based schemes
and APPM, are designed for oracle models for convex minimization or maximally
monotone operators \cite{barre_taylor_bach_2023_inexact_proximal,guler_1992_new_proximal_point,kim2021accelerated_proximal_point}.
Those results concern oracles and performance measures different from those of
the forward-backward model studied here.


\subsection{Organization}\label{subsec:organization}

The remainder of this manuscript is organized as follows. In
\Cref{subsec:notation_preliminaries}, we collect the notation and present basic results used in the
sequel. \Cref{sec:main_results} presents the main results: the convergence
guarantee for \ProxITEM, its matching optimality result over span-based
first-order methods, and the asymptotic rate of the stationary method
\ProxTMM{}\@. \Cref{sec:auxiliary_estimates_convergence_analyses}
collects the auxiliary estimates used in the proofs: first the Lyapunov
analysis for \ProxITEM, and then the limiting-coefficient and Lyapunov
analysis for \ProxTMM{}\@.
\Cref{sec:proof_main_prox_item,sec:proof_main_prox_item_is_optimal,sec:proof_main_prox_tmm}
prove the three main theorems, respectively. Finally, \Cref{sec:conclusion}
contains concluding remarks.


\subsection{Notation and preliminaries}\label{subsec:notation_preliminaries}

Let \(\naturals\) denote the set of nonnegative integers, \(\N\) denote the
set of positive integers, \(\llbracket n,m \rrbracket = \set{l \in \mathbb{Z} \mid n \leq l \leq m}\)
denote the set of integers between \(n\) and \(m\), inclusive, where
\(\mathbb{Z}\) denotes the set of integers, \(\reals\) denotes the set of real numbers, \(\reals_+\)
denotes the set of nonnegative real numbers, and \(\reals_{++}\) denotes the set of positive real numbers.

Let \(h:\calH\to\reals\cup\{+\infty\}\) and \(\mu,L\in\reals_+\).
The function \(h\) is said to be \emph{proper} if \(\dom h\neq\emptyset\), where \(\dom h=\set{x\in\calH \mid h\p{x}<+\infty}\) is called the \emph{effective domain} of \(h\).
It is \emph{lower semicontinuous} if \(\liminf_{y\to x} h\p{y} \geq h\p{x}\) for each \(x\in\calH\).
It is \emph{convex} if \(h\p{\p{1-\lambda}x+\lambda y}\leq \p{1-\lambda}h\p{x}+\lambda h\p{y}\) for each \(x,y\in\calH\) and \(\lambda\in[0,1]\).
It is \(\mu\)-\emph{strongly convex} if \(h\) is proper and \(h-\frac{\mu}{2}\norm{\cdot}^2\) is convex.
It is \(L\)-\emph{smooth} if \(h\) is Fr{\'e}chet differentiable and \(\nabla h:\calH\to\calH\) is \(L\)-Lipschitz continuous, i.e., \(\Bignorm{\nabla h\p{x}-\nabla h\p{y}}\leq\allowbreak L\Bignorm{x-y}\) for each \(x,y\in\calH\).

\begin{definition}
    \phantomsection\label{def:calF}
    Let \(0 \leq \mu \leq L \leq +\infty\), with \(\mu<+\infty\).
    We let \(\mathcal{F}_{\mu,L}\p{\mathcal{H}}\) denote the class of all
    proper and lower semicontinuous functions
    \(h:\calH\rightarrow\reals\cup\{+\infty\}\) that are
    \(\mu\)-strongly convex and, if \(L < +\infty\), \(L\)-smooth.
\end{definition}

Under \Cref{ass:main}, the optimization problem \eqref{eq:problem_class} has a unique solution \cite[Corollaries~9.4 and~11.17]{bauschke_combettes_2017_convex_analysis_monotone}.

Let \(h:\calH\to\reals\cup\{+\infty\}\).
If \(h\) is proper, its \emph{subdifferential} is the set-valued operator \(\partial h:\calH\to2^{\calH}\) defined by
\(\partial h \p{x}
= \Bigset{u \in \calH \xmiddle| h\p{y} \geq h\p{x} + \Biginner{u}{y-x}
\text{ for each } y \in \calH}\)
for each \(x \in \calH\).
The effective domain of \(\partial h\) is \(\dom\partial h = \set{x\in\calH \mid \partial h\p{x}\neq\emptyset}\), and \(\dom\partial h\subseteq\dom h\); see \cite[Proposition 16.4]{bauschke_combettes_2017_convex_analysis_monotone}.
If \(h\) is proper, convex, and lower semicontinuous and \(\gamma\in\reals_{++}\), then its
\emph{proximal operator} is the single-valued operator \(\Prox^{\gamma}_{h}:\calH\to\calH\) given by \(\Prox^{\gamma}_{h}\p{x} =\allowbreak \argmin_{z\in\calH}\p{h\p{z} +\allowbreak \frac{1}{2\gamma}\Bignorm{x-z}^2}\) for each \(x\in\calH\), with \(h\p{\Prox^{\gamma}_{h}\p{x}} < + \infty\); see~\cite[Proposition 12.15]{bauschke_combettes_2017_convex_analysis_monotone}.
In particular, by \cite[\mbox{Propositions}~16.44 and~16.6]{bauschke_combettes_2017_convex_analysis_monotone},
\begin{equation}\label{eq:prox_characterization}
    \p{\forall x,p\in\calH}\quad
    \left[
    \begin{gathered}[c]
        p = \Prox^{\gamma}_{h}\p{x}
        \, \Leftrightarrow \,
        \gamma^{-1}\p{x-p} \in \partial h\p{p}
    \end{gathered}
    \right].
\end{equation}

\begin{proposition}[{\cite[Theorem 4]{taylor2016smoothstronglyconvex}}]\label{prp:smooth_strongly_convex_interpolation}
    {\sloppy
    Let \(0 \leq \mu < L\leq +\infty\), and let \(\{\p{x_{i},F_{i},u_{i}}\}_{i\in \mathcal{J}}\) be a finite family of triplets in \(\calH\times\allowbreak\reals\times\allowbreak\calH\) indexed by \(\mathcal{J}\).
    Then the following are equivalent:
    \par}
    \begin{enumerate}[label={(\roman*)}]
        \item There exists \(h\in\mathcal{F}_{\mu,L}\p{\calH}\) such that
        \(h\p{x_{i}} = F_i\) and \(u_{i} \in \partial h\p{x_{i}}\) for each
        \(i\in\mathcal{J}\).
        \item\label{prp:smooth_strongly_convex_interpolation:ii}
        It holds that \(F_{i} \geq F_{j} + \inner{u_{j}}{x_{i}-x_{j}} + \frac{\mu}{2}\norm{x_{i}-x_{j}}^{2} + \frac{1}{2\p{L-\mu}}\norm{u_{i}-u_{j}-\mu\p{x_{i}-x_{j}}}^2\) for each \(i,j\in\mathcal{J}\), where \(\frac{1}{2\p{L-\mu}}\) is interpreted as \(0\) in the case \(L = +\infty\).
    \end{enumerate}
\end{proposition}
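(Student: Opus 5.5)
We prove the two implications separately. Both rest on reducing to the case \(\mu=0\) by subtracting the quadratic \(\frac{\mu}{2}\norm{\cdot}^2\).

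\emph{Reduction to \(\mu=0\).} Set \(\tilde h=h-\frac{\mu}{2}\norm{\cdot}^2\). Then \(h\in\mathcal{F}_{\mu,L}\p{\calH}\) if and only if \(\tilde h\in\mathcal{F}_{0,L-\mu}\p{\calH}\), with \(L-\mu\) read as \(+\infty\) when \(L=+\infty\). Moreover, \(\partial \tilde h\p{x}=\partial h\p{x}-\mu x\). Correspondingly, transform the data as
\[
\tilde F_i=F_i-\tfrac{\mu}{2}\norm{x_i}^2,\qquad \tilde u_i=u_i-\mu x_i .
\]
Expanding \(\frac{\mu}{2}\norm{x_i}^2-\frac{\mu}{2}\norm{x_j}^2-\mu\inner{x_j}{x_i-x_j}=\frac{\mu}{2}\norm{x_i-x_j}^2\) shows that the inequality in (ii) for the original data is exactly
\[
\tilde F_i\geq \tilde F_j+\inner{\tilde u_j}{x_i-x_j}+\tfrac{1}{2\p{L-\mu}}\norm{\tilde u_i-\tilde u_j}^2 .
\]
So both (i) and (ii) reduce to the case \(\mu=0\) with smoothness constant \(L':=L-\mu\in\p{0,+\infty}\).

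\emph{(i)\(\Rightarrow\)(ii) for \(\mu=0\).} If \(L'=+\infty\), the inequality is just the subgradient inequality. If \(L'<+\infty\), \(h\) is differentiable and \(u_j=\nabla h\p{x_j}\). Fix \(j\) and let \(\varphi=h-\inner{u_j}{\cdot}\). This \(\varphi\) is convex and \(L'\)-smooth, and it is minimized at \(x_j\) because \(\nabla\varphi\p{x_j}=0\). Apply the descent lemma to \(\varphi\) at \(x_i\) with step \(1/L'\):
\[
\varphi\p{x_j}\leq\varphi\p{x_i-\tfrac{1}{L'}\nabla\varphi\p{x_i}}\leq\varphi\p{x_i}-\tfrac{1}{2L'}\norm{u_i-u_j}^2 .
\]
Rearranging gives the required inequality.

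\emph{(ii)\(\Rightarrow\)(i).} The first step is the nonsmooth case \(L=+\infty\), where I keep \(\mu\geq 0\) general. Define
\[
h\p{x}=\max_{j\in\mathcal{J}}\Bigl\{F_j+\inner{u_j}{x-x_j}+\tfrac{\mu}{2}\norm{x-x_j}^2\Bigr\}.
\]
This is a finite maximum of continuous \(\mu\)-strongly convex functions, so \(h\in\mathcal{F}_{\mu,\infty}\p{\calH}\). Condition (ii) says precisely that at \(x_i\) every piece is at most \(F_i\), while the \(i\)-th piece equals \(F_i\); hence \(h\p{x_i}=F_i\). Since the \(i\)-th piece is active at \(x_i\), its gradient \(u_i\) belongs to \(\partial h\p{x_i}\).

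For \(L'<+\infty\) (after the reduction, \(\mu=0\)), I would pass through conjugation. Consider the conjugate data
\[
\p{u_i,\;G_i,\;x_i},\qquad G_i=\inner{u_i}{x_i}-F_i .
\]
Substituting \(G_i,G_j\) into the \(\frac{1}{L'}\)-strongly-convex, \(L=+\infty\) interpolation inequality gives
\[
G_i\geq G_j+\inner{x_j}{u_i-u_j}+\tfrac{1}{2L'}\norm{u_i-u_j}^2 .
\]
After cancelling the inner-product terms, this is exactly the \(\mu=0\), \(L'\)-smooth inequality for the original data with the indices swapped. Checking this algebra carefully is the main (though routine) obstacle.

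The nonsmooth case then yields some \(\psi\in\mathcal{F}_{1/L',\infty}\p{\calH}\) with \(\psi\p{u_i}=G_i\) and \(x_i\in\partial\psi\p{u_i}\). Set \(h=\psi^*\). Since \(\psi\) is \(\frac{1}{L'}\)-strongly convex, proper, and lower semicontinuous, \(h\) is convex, real-valued, and \(L'\)-smooth (Bauschke–Combettes, Theorem 18.15), so \(h\in\mathcal{F}_{0,L'}\p{\calH}\). Fenchel–Young equality at \(x_i\in\partial\psi\p{u_i}\) gives \(h\p{x_i}=\inner{u_i}{x_i}-G_i=F_i\) and \(u_i\in\partial h\p{x_i}\).

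Finally, adding back \(\frac{\mu}{2}\norm{\cdot}^2\) undoes the reduction and completes the proof.
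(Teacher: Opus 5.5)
Your proof is correct and takes essentially the same route as the proof of the cited result \cite[Theorem~4]{taylor2016smoothstronglyconvex}, which the paper only invokes without reproving. That route is: subtract the curvature \(\frac{\mu}{2}\norm{\cdot}^2\) to reduce to \(\mathcal{F}_{0,L-\mu}\p{\calH}\); pass to conjugate data to turn smooth interpolation into strongly convex nonsmooth interpolation; and settle the latter by an explicit pointwise maximum. Your use of strongly convex quadratic pieces directly, instead of one more curvature subtraction followed by affine pieces, is an immaterial variation.

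One step is asserted without argument: that \(\tilde h\) is \(\p{L-\mu}\)-smooth whenever \(h\in\mathcal{F}_{\mu,L}\p{\calH}\) with \(L<+\infty\). It deserves a line, and your own tools supply it. Since \(\frac{L-\mu}{2}\norm{\cdot}^2-\tilde h=\frac{L}{2}\norm{\cdot}^2-h\) is convex, \(\tilde h\) satisfies the descent lemma with constant \(L-\mu\). Your \(\varphi\)-argument then gives cocoercivity of \(\nabla\tilde h\) with constant \(1/\p{L-\mu}\), and hence its \(\p{L-\mu}\)-Lipschitz continuity.
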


\Cref{prp:smooth_strongly_convex_interpolation} motivates the following
notation, and we refer to the underlying inequalities as interpolation
inequalities: under \Cref{ass:main}, for arbitrary \(x,y\in\calH\), set
\begin{equation}\label{eq:smooth_interpolation_inequality}
    \begin{aligned}
        \mathcal{I}_{f}\p{x,y}
        ={}&{}
        f\p{x}-f\p{y}
        -\inner{\nabla f\p{y}}{x-y}
        -\frac{\mu}{2}\norm{x-y}^{2} \\
        &{}-
        \frac{1}{2\p{L-\mu}}
        \norm{\nabla f\p{x}-\nabla f\p{y}-\mu\p{x-y}}^{2}
        \in \reals_{+}.
    \end{aligned}
\end{equation}
For \(x,y,s\in\calH\), define the symbol
\begin{equation}\label{eq:convex_interpolation_inequality}
    \mathcal{I}_{g}\p{x,y,s}
    =
    g\p{x}-g\p{y}
    -\inner{s}{x-y}.
\end{equation}
If \(x,y\in\dom\partial g\) and \(s\in\partial g\p{y}\), then
\begin{align}
    \mathcal{I}_{g}\p{x,y,s}
    &\in \reals_{+}.
    \label{eq:convex_interpolation_inequality:nonnegative}
\end{align}


\section{Main results}\label{sec:main_results}

We first state the exact distance guarantee for \ProxITEM{}, then its matching optimality result over a class of span-based methods, and finally the asymptotic rate of \ProxTMM{}\@.

\begin{theorem}
    \phantomsection\label{thm:main:prox_item}
    Suppose that \Cref{ass:main} holds.
    Let~\(((x^k,\allowbreak y^k,\allowbreak z^k,\allowbreak A_k))_{k\in\naturals}\) be generated by \refProxITEM, let \(x^{\star}\in\argmin_{x\in\calH}\allowbreak\p{f\p{x} + g\p{x}}\), and let \(q=\mu/L\).
    Then
    \begin{align}\label{eq:main:prox_item}
        \p{\forall k\in\naturals}\quad
        \norm{z^k-x^{\star}}^{2}\leq \frac{1}{1+qA_{k}}\norm{x^0-x^{\star}}^{2}.
    \end{align}
\end{theorem}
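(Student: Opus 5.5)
The plan is a Lyapunov argument that extends the ITEM potential of \cite[Section~2.1]{taylor2023optimal_gradient} by terms involving \(g\). Let \(F=f+g\) and let \(x^\star\) be its unique minimizer. Optimality gives some \(s^\star\in\partial g\p{x^\star}\) with \(\nabla f\p{x^\star}+s^\star=0\). For \(k\geq 1\), set
\[
s^k=\frac{L}{\delta_{k-1}}\p{\bar z^{k}-z^{k}}.
\]
By \eqref{eq:prox_characterization}, \(s^k\in\partial g\p{z^k}\). The update \eqref{eq:prox-item:x-update} can then be rewritten as \(x^{k+1}=y^k-\frac1L\p{\nabla f\p{y^k}+s^{k+1}}\), so \(x^{k+1}\) is a gradient step along the ``composite gradient'' \(\nabla f\p{y^k}+s^{k+1}\). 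Likewise, \eqref{eq:prox-item:prox-point}--\eqref{eq:prox-item:z-update} read
\[
z^{k+1}=\p{1-q\delta_k}z^k+q\delta_k y^k-\frac{\delta_k}{L}\p{\nabla f\p{y^k}+s^{k+1}}.
\]
This is exactly the ITEM \(z\)-update with \(\nabla f\p{y^k}\) replaced by the composite gradient.

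The candidate potential is
\[
\phi_k=\p{L+\mu A_k}\norm{z^k-x^\star}^2+LA_k\,\Delta_k,
\]
where \(\Delta_k\) is a nonnegative suboptimality-type quantity that equals ITEM's when \(g=0\). In ITEM this quantity is \(f\p{x^k}-f^\star-\frac{1}{2L}\norm{\nabla f\p{x^k}}^2\) (up to normalization). In the composite case I would replace it by an expression built from \(f\p{x^k}\), \(\nabla f\p{x^k}\), \(g\p{z^k}\) and \(s^k\). The natural choice is
\[
\Delta_k=f\p{x^k}-f\p{x^\star}-\inner{\nabla f\p{x^\star}}{x^k-x^\star}-\frac{1}{2L}\norm{\nabla f\p{x^k}-\nabla f\p{x^\star}}^2+\mathcal{I}_g\p{z^k,x^\star,s^\star}.
\]
This quantity is nonnegative by \eqref{eq:smooth_interpolation_inequality} (taking \(\mu\) to zero there) and by \eqref{eq:convex_interpolation_inequality:nonnegative}. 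I may need to adjust the \(g\)-part, for instance to \(\mathcal{I}_g\) evaluated at a convex combination. Since \(A_0=0\) and \(z^0=x^0\), we have \(\phi_0=L\norm{x^0-x^\star}^2\). Once \(\phi_{k+1}\leq\phi_k\) is established, it gives \(\p{L+\mu A_k}\norm{z^k-x^\star}^2\leq\phi_k\leq L\norm{x^0-x^\star}^2\), which is \eqref{eq:main:prox_item}.

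The core step is to prove \(\phi_{k+1}\leq\phi_k\) by writing \(\phi_k-\phi_{k+1}\) as a nonnegative combination of interpolation inequalities plus a negated square. For \(f\) I would use \(\mathcal{I}_f\) at the pairs \(\p{x^\star,y^k}\), \(\p{x^k,y^k}\), and possibly \(\p{x^{k+1},y^k}\) and \(\p{y^k,x^{k+1}}\), with the same multipliers \(\lambda_k\) as in the ITEM proof, namely proportional to \(A_{k+1}-A_k\) and \(A_k\). For \(g\) I would use \(\mathcal{I}_g\p{x^\star,z^{k+1},s^{k+1}}\), \(\mathcal{I}_g\p{z^k,z^{k+1},s^{k+1}}\) and \(\mathcal{I}_g\p{z^{k+1},x^\star,s^\star}\). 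These three are linear in the \(g\)-values, and their multipliers are forced by matching the \(g\p{z^k}\) and \(g\p{z^{k+1}}\) coefficients of \(\phi_k\) and \(\phi_{k+1}\), which gives multipliers of order \(A_{k+1}-A_k\) and \(A_k\). After substituting the algorithm, the remainder must be \(-c_k\norm{w_k}^2\) for some explicit linear combination \(w_k\) of \(z^k-x^\star\), \(y^k-x^\star\), \(\nabla f\p{y^k}-\nabla f\p{x^\star}\) and \(s^{k+1}-s^\star\).

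The main obstacle is this algebraic identity. The parameter choices \(\beta_k\), \(\delta_k\), and the recursion \eqref{eq:prox-item:A-recursion} (equivalently \(\p{1-q}^2A_{k+1}^2\) relation / \(\delta_k^2=A_{k+1}/\p{1+qA_{k+1}}\)) must make the residual Gram matrix rank one and negative semidefinite. Because \(s^{k+1}\) enters only through the composite gradient, I expect the terms to organize exactly as in ITEM with \(\nabla f\p{y^k}\) replaced by \(\nabla f\p{y^k}+s^{k+1}\). The extra cross terms between \(s^{k+1}\) and \(\nabla f\p{y^k}\), which distinguish \(x^{k+1}\) from a pure gradient step, should be absorbed by the \(\mathcal{I}_g\p{z^k,z^{k+1},s^{k+1}}\) term. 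I would verify this by matching the coefficients of the Gram entries symbolically, using the ITEM certificate as the \(g=0\) check. If the simple \(\Delta_k\) above fails, I would use a PEP-based numerical solution to identify the correct \(g\)-terms in \(\Delta_k\) and then certify the identity symbolically.
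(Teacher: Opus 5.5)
There is a genuine gap. The entire content of the theorem is the one-step inequality, and the potential you actually write down is not monotone, already when $g=0$.

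\textbf{A counterexample with $g=0$.} Take $f=\frac{\mu}{2}\norm{\cdot}^2$ and $g=0$, so $x^\star=0$. Since $A_0=0$, we have $\beta_0=0$ and $y^0=x^0$. Also $A_1=4/(1-q)^2$ and $\delta_0=2/(1+q)$, which give $z^1=\frac{1-q}{1+q}x^0$ and $x^1=(1-q)x^0$. Then
\[
(L+\mu A_1)\norm{z^1}^2=L\norm{x^0}^2=\phi_0 .
\]
This is the extremal instance, where the distance term alone is already tight. Your quantity, however, is $\Delta_1=\frac{\mu(1-q)}{2}\norm{x^1}^2=\frac{\mu(1-q)^3}{2}\norm{x^0}^2>0$. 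Hence $\phi_1>\phi_0$ for any positive weight on $A_k\Delta_k$. Adjusting the $g$-part cannot repair this, because all $g$-terms vanish when $g=0$.

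\textbf{What ITEM's potential actually is.} The failure comes from a misrecollection. ITEM's function-value term is not built from $f(x^k)$ and $\nabla f(x^k)$ with the $\mu=0$ inequality; the method never queries $x^k$. It is the full $\mu$-strongly convex quantity $(1-q)A_k\,\mathcal{I}_f(y^{k-1},x^\star)$, taken at the last gradient point. When $g=0$,
\[
\mathcal{I}_f(y^{k-1},x^\star)=f(y^{k-1})-f(x^\star)-\frac{1}{2L}\norm{\nabla f(y^{k-1})}^2-\frac{\mu}{2(1-q)}\norm{x^k-x^\star}^2 .
\]
This vanishes on both $\frac{\mu}{2}\norm{\cdot}^2$ and $\frac{L}{2}\norm{\cdot}^2$, as any valid added term must.

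\textbf{The $g$-part has the wrong structure.} You expect everything to organize as in ITEM with $\nabla f(y^k)$ replaced by $\nabla f(y^k)+s^{k+1}$. This is not justified: $s^{k+1}\in\partial g(z^{k+1})$ lives at a different point than $y^k$, and no interpolation inequality applies to that composite vector at a single point.

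The paper's potential handles this with two terms:
\begin{itemize}
\item $qA_k\,\mathcal{I}_g(x^\star,z^k,s_g^k)$, so $g(z^k)$ enters with a \emph{negative} coefficient, the opposite of your $\mathcal{I}_g(z^k,x^\star,s^\star)$;
\item $\frac{A_k}{2L}\norm{s_g^k-s_g^\star}^2$.
\end{itemize}

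The one-step identity $\mathcal{V}_{k+1}=\mathcal{V}_k-\mathcal{R}_k$ then has zero residual. Its remainder $\mathcal{R}_k$ is built from:
\begin{itemize}
\item $\mathcal{I}_f(x^\star,y^k)$ and $\mathcal{I}_f(y^{k-1},y^k)$, not pairs involving $x^k$ or $x^{k+1}$;
\item the four $g$-inequalities $\mathcal{I}_g(x^\star,z^{k+1},s_g^{k+1})$, $\mathcal{I}_g(z^k,x^\star,s_g^\star)$, $\mathcal{I}_g(z^{k+1},x^\star,s_g^\star)$ and $\mathcal{I}_g(z^{k+1},z^k,s_g^k)$, with weights involving $\sigma_k=\sqrt{(1+A_k)(1+qA_k)}$;
\item two nonnegative squares in the $s_g$'s.
\end{itemize}

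Your fallback of locating the right terms by a PEP computation is a way to search for such a certificate, not a proof. Without an explicit potential and a verified one-step identity, the argument does not establish the theorem.

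Your rewritings of the $x$- and $z$-updates through $s^{k+1}$ are correct. So is the final step from $\phi_k\leq\phi_0=L\norm{x^0-x^\star}^2$ to the bound.
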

The proof of \Cref{thm:main:prox_item} is given in
\Cref{sec:proof_main_prox_item}.

\begin{remark}
    \begin{enumeratremark}
        \item When \(g=0\), the rate in \eqref{eq:main:prox_item} matches the
        ITEM rate in \cite[Theorem~3]{taylor2023optimal_gradient}.
        \item The following two simple problem instances attain equality
        in \eqref{eq:main:prox_item}; they are the smooth examples from
        \cite[Section~2.3]{taylor2023optimal_gradient} with \(g=0\).
        \begin{enumerate}[label=(\alph*)]
            \item If \(f=\frac{L}{2}\norm{\cdot}^{2}\) and \(g=0\), then
            \(x^{\star}=0\) and
            \(z^{k}=\p{-1}^{k}\p{1+qA_k}^{-1/2}x^0\) for each \(k\in\naturals\).
            \item If \(f=\frac{\mu}{2}\norm{\cdot}^{2}\) and \(g=0\), then
            \(x^{\star}=0\) and
            \(z^{k}=\p{1+qA_k}^{-1/2}x^0\) for each \(k\in\naturals\).
        \end{enumerate}
    \end{enumeratremark}
\end{remark}
We next state the optimality of \ProxITEM{} under \Cref{ass:main}, relative
to a span-based class \(\mathcal{A}_{k,\mu,L}\) of first-order methods with \(k\)
gradient-oracle calls for \(f\) and an arbitrary number of proximal-oracle calls for \(g\), where \(k\in\naturals\) is fixed.
A method \(\mathsf A\in\mathcal{A}_{k,\mu,L}\) in this class takes an initial point
\(u^0\in\calH\) and returns
\(\mathsf A\p{f,g,u^0}=u^k\)\nolinebreak[4]\ according to the following rule:
\begin{equation}\label{eq:span-based-proximal-method-update}
    \left[
    \begin{aligned}
        &v^{0} = u^{0},\\
        &m\in\naturals \text{ such that } m\geq k,\\
        &\theta \in \set{0,1}^{\llbracket0,m-1\rrbracket} \text{ such that } \textstyle\sum_{\ell=0}^{m-1}\theta_{\ell} = k, \\
        &s^{\ell} =
        \begin{cases}
            \nabla f \p{v^{\ell}} &\text{if } \theta_{\ell} = 1,\\
            \vcenter{\hbox{\(\underbracket{\gamma_{\ell}^{-1} \p{v^{\ell} - \Prox_g^{\gamma_{\ell}}\p{v^{\ell}}}}_{\in \partial g \p{\Prox_g^{\gamma_{\ell}}\p{v^{\ell}}}}\)}} & \vcenter{\hbox{\(\text{if } \theta_{\ell} = 0 \text{ for some }\gamma_{\ell}\in\reals_{++},\)}}
        \end{cases}
        \quad \forall \ell \in \llbracket0,m-1\rrbracket, \\
        &v^{\ell} \in u^{0} + \operatorname{span}\set{s^{i} \mid i \in \llbracket 0,\ell-1 \rrbracket }\quad \forall \ell \in \llbracket 1,m-1 \rrbracket, \\
        &u^{k} \in u^{0} + \operatorname{span}\set{s^{i} \mid i \in \llbracket 0,m-1 \rrbracket }
    \end{aligned}
    \right.
\end{equation}
Here, we use the convention \(\operatorname{span}\emptyset=\set{0}\).
For \(\mathsf A\in\mathcal{A}_{k,\mu,L}\), we define the worst-case relative squared
distance to the solution as
\begin{equation}\label{eq:fixed-step-proximal-method-risk}
    \mathcal{R}_{\mu,L}\p{\mathsf A}
    =
    \sup
    \Bigset{
        \frac{\norm{\mathsf A\p{f,g,u^0}-u^{\star}}^2}{\norm{u^0-u^{\star}}^2}
        \xmiddle|
        \begin{gathered}
            \calH\text{ is a real Hilbert space,}\\
            f\in\mathcal{F}_{\mu,L}\p{\calH},\\
            g\in\mathcal{F}_{0,\infty}\p{\calH},\\
            u^{\star}\in\argmin_{u\in\calH}\p{f\p{u}+g\p{u}},\\
            u^0\in\calH,\quad u^0\neq u^{\star}
        \end{gathered}
    }.
\end{equation}

\begin{theorem}
    \phantomsection\label{thm:main:prox_item_is_optimal}
    Let~\(0<\mu<L<+\infty\).
    Set \(q=\mu/L\), denote by \(\p{A_k}_{k\in\naturals}\) the scalar
    sequence generated by \refProxITEM, and define \(\mathcal{R}_{\mu,L}\) by
    \eqref{eq:fixed-step-proximal-method-risk}. Then
    \begin{equation}\label{eq:main:prox_item_is_optimal}
        \p{\forall k\in\naturals}\quad
        \inf_{\mathsf A\in\mathcal{A}_{k,\mu,L}}
        \mathcal{R}_{\mu,L}\p{\mathsf A}
        =
        \frac{1}{1+qA_k}.
    \end{equation}
    For each \(k\in\naturals\), the infimum in
    \eqref{eq:main:prox_item_is_optimal} is attained by \refProxITEM{} after
    \(k\) iterations, with input \(x^0=u^0\) and output \(u^k=z^k\).
\end{theorem}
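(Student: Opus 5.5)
The equality in \eqref{eq:main:prox_item_is_optimal} follows from two inequalities. The upper bound comes from showing that \ProxITEM{} belongs to \(\mathcal{A}_{k,\mu,L}\) and invoking \Cref{thm:main:prox_item}. The lower bound comes from restricting the supremum in \eqref{eq:fixed-step-proximal-method-risk} to \(g=0\) and invoking the known ITEM lower bound \cite{drori2021exact}.

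\emph{Upper bound.} Fix \(k\in\naturals\) and consider \(k\) iterations of \refProxITEM{} with \(x^0=u^0\), output \(u^k=z^k\). I would show by induction that \(x^j,y^j,z^j,\bar z^{j}\) all lie in \(u^0+\operatorname{span}\) of the oracle outputs produced so far. Each iteration is encoded as two oracle calls, so \(m=2k\) with \(\theta\) alternating \(1,0\).
\begin{itemize}
    \item The gradient call has \(\theta_{2j}=1\) and \(v^{2j}=y^j\). This is admissible because \(y^j\) is an affine combination, with coefficients summing to one, of \(z^j\) and \(x^j\), both of which lie in the affine span by the induction hypothesis.
    \item The proximal call has \(\theta_{2j+1}=0\), \(v^{2j+1}=\bar z^{j+1}\), and \(\gamma_{2j+1}=\delta_j/L\). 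The point \(\bar z^{j+1}\) is again an affine combination of \(z^j\) and \(y^j\), plus a multiple of \(s^{2j}=\nabla f(y^j)\).
    \item The returned subgradient is \(s^{2j+1}=(L/\delta_j)(\bar z^{j+1}-z^{j+1})\). Hence \(z^{j+1}=\bar z^{j+1}-(\delta_j/L)s^{2j+1}\) and \(x^{j+1}=y^j-\frac1L s^{2j}-\frac{1}{L}s^{2j+1}\) both stay in the affine span.
\end{itemize}
Exactly \(k\) gradient calls are used, so \ProxITEM{} lies in \(\mathcal{A}_{k,\mu,L}\). \Cref{thm:main:prox_item} applies for every Hilbert space \(\calH\), every \(f\in\mathcal{F}_{\mu,L}(\calH)\), and every \(g\in\mathcal{F}_{0,\infty}(\calH)\). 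It gives a risk of at most \((1+qA_k)^{-1}\), hence the infimum is at most \((1+qA_k)^{-1}\).

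\emph{Lower bound.} Take any \(\mathsf A\in\mathcal{A}_{k,\mu,L}\) and restrict the supremum to \(g=0\), which lies in \(\mathcal{F}_{0,\infty}\). Then \(\Prox_g^{\gamma}\) is the identity, so every proximal step returns \(s^\ell=0\) and contributes nothing to the spans. The output \(u^k\) therefore lies in \(u^0+\operatorname{span}\{\nabla f(v^\ell):\theta_\ell=1\}\), and each query point \(v^\ell\) lies in \(u^0\) plus the span of earlier gradients.

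After deleting the trivial proximal steps, \(\mathsf A\) restricted to \(g=0\) is a span-based gradient method with exactly \(k\) gradient evaluations, whose output lies in the span of all \(k\) gradients. The lower bound of \cite{drori2021exact} (see also \cite{taylor2023optimal_gradient}) states that every such method has worst-case ratio \(\norm{u^k-u^\star}^2/\norm{u^0-u^\star}^2\) of at least \((1+qA_k)^{-1}\) over \(f\in\mathcal{F}_{\mu,L}\), in a sufficiently high-dimensional Hilbert space. Since the supremum in \eqref{eq:fixed-step-proximal-method-risk} ranges over all Hilbert spaces, we get \(\mathcal{R}_{\mu,L}(\mathsf A)\geq(1+qA_k)^{-1}\).

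Combining the two bounds gives \eqref{eq:main:prox_item_is_optimal}, with the infimum attained by \ProxITEM{}.

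\emph{Main obstacle.} The delicate point is matching the exact hypotheses of the cited lower bound to our class \(\mathcal{A}_{k,\mu,L}\). The things to check are:
\begin{itemize}
    \item that the cited result permits the output to be an arbitrary point of the span of all \(k\) gradients (it does, since ITEM's output \(z^k\) is of this form);
    \item that it applies to non-adaptive and adaptive span-based choices alike, the latter being handled via a resisting-oracle, worst-case-function construction;
    \item that the \(A_k\) recursion used in \cite{drori2021exact} coincides with \eqref{eq:prox-item:A-recursion}, which I would verify directly.
\end{itemize}
For \(k=0\) both sides equal \(1\), since \(A_0=0\), and the statement is trivial.
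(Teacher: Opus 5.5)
Your proposal is correct and follows the paper's proof essentially step for step. For the upper bound you use the same two-oracle-calls-per-iteration identification (\(m=2k\), \(v^{2\ell}=y^\ell\), \(v^{2\ell+1}=\bar z^{\ell+1}\), \(\gamma_{2\ell+1}=\delta_\ell/L\)) to place \ProxITEM{} in \(\mathcal{A}_{k,\mu,L}\) and then apply \Cref{thm:main:prox_item}; for the lower bound you restrict to \(g=0\), where all proximal residuals vanish, and invoke the Drori--Taylor bound for smooth strongly convex minimization, exactly as the paper does (your explicit induction keeping the iterates in the affine span is detail the paper leaves implicit).
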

The proof of \Cref{thm:main:prox_item_is_optimal} is given in
\Cref{sec:proof_main_prox_item_is_optimal}.

\begin{theorem}
    \phantomsection\label{thm:main:prox_tmm}
    Suppose that \Cref{ass:main} holds.
    Let~\(((x^k,\allowbreak y^k,\allowbreak z^k,\allowbreak \bar z^{k+1}))_{k\in\naturals}\) be generated by \refProxTMM, let \(x^{\star}\in\argmin_{x\in\calH}\allowbreak\p{f\p{x} + g\p{x}}\), and let \(q=\mu/L\).
    Then
    \begin{equation*}
        \p{\forall k\in\N}\quad
        \norm{z^k-x^{\star}}^2
        \leq
        c\p{1-\sqrt{q}}^{2\p{k-1}},
    \end{equation*}
    where
    \begin{equation*}
        c
        =
        \frac{1-q}{\mu}\mathcal{I}_f\p{y^0,x^{\star}}
        +\frac{q}{\mu}\mathcal{I}_g\p{x^{\star},z^1,s_g^1}
        +\frac{1}{2\mu L}\norm{s_g^1-s_g^{\star}}^2
        +\norm{z^1-x^{\star}}^2,
    \end{equation*}
    with
    \begin{equation*}
        s_g^1
        =
        \sqrt{q}L\p{\bar z^1-z^1}
        \in
        \partial g\p{z^1}
        \quad\text{and}\quad
        s_g^{\star}
        =
        -\nabla f\p{x^{\star}}
        \in
        \partial g\p{x^{\star}}.
    \end{equation*}
\end{theorem}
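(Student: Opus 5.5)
The plan is a Lyapunov argument with a stationary potential, obtained as the limit of the \ProxITEM{} potential. For \(k\in\N\), set \(s_g^{k}=\sqrt{q}L(\bar z^{k}-z^{k})\). By \eqref{eq:prox-tmm:z-update} and \eqref{eq:prox_characterization}, this gives \(s_g^k\in\partial g(z^k)\). Also set \(s_g^\star=-\nabla f(x^\star)\in\partial g(x^\star)\). Define
\[
\Phi_k=\frac{1-q}{\mu}\mathcal{I}_f\p{y^{k-1},x^{\star}}
+\frac{q}{\mu}\mathcal{I}_g\p{x^{\star},z^k,s_g^k}
+\frac{1}{2\mu L}\norm{s_g^k-s_g^{\star}}^2
+\norm{z^k-x^{\star}}^2 .
\]
Then \(\Phi_1=c\). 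Each term is nonnegative: the first by \eqref{eq:smooth_interpolation_inequality}, and the second by \eqref{eq:convex_interpolation_inequality:nonnegative}, since \(x^\star,z^k\in\dom\partial g\). Hence \(\norm{z^k-x^\star}^2\leq\Phi_k\). The theorem follows once we show the one-step contraction \(\Phi_{k+1}\leq(1-\sqrt q)^2\Phi_k\) for every \(k\in\N\) and iterate it from \(k=1\).

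To prove the contraction, I would write \(\Phi_{k+1}-(1-\sqrt q)^2\Phi_k\) as minus a nonnegative combination of interpolation inequalities plus a nonpositive quadratic form. The points involved are \(x^\star\), \(y^{k-1}\) and \(y^k\) for \(f\), and \(x^\star\), \(z^k\) and \(z^{k+1}\) for \(g\). The inequalities to use are \(\mathcal{I}_f(x^\star,y^k)\geq0\), \(\mathcal{I}_f(y^{k-1},y^k)\geq0\), \(\mathcal{I}_g(z^{k+1},x^\star,s_g^\star)\geq0\) and \(\mathcal{I}_g(z^k,z^{k+1},s_g^{k+1})\geq0\), possibly together with \(\mathcal{I}_f(y^k,x^\star)\). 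I would guess the multipliers by taking the limits \(A_k\to\infty\) of the weights used in the \ProxITEM{} Lyapunov analysis of \Cref{sec:auxiliary_estimates_convergence_analyses}, normalised by \(A_k\) (so \(A_{k+1}/A_k\to(1-\sqrt q)^{-2}\)).

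The remaining step is to eliminate the function values. The coefficients of \(f(y^k)\), \(f(y^{k-1})\), \(f(x^\star)\), \(g(z^k)\), \(g(z^{k+1})\) and \(g(x^\star)\) must cancel, which fixes the multipliers up to a few free parameters. The residual is then a quadratic form in \(z^k-x^\star\), \(x^k-x^\star\), \(\nabla f(y^k)-\nabla f(x^\star)\), \(s_g^{k+1}-s_g^\star\) and related vectors. This reduction uses the update rules: \(y^k\) is an affine combination of \(z^k\) and \(x^k\), \(z^{k+1}=\bar z^{k+1}-s_g^{k+1}/(\sqrt qL)\), and \(x^{k+1}=y^k-\frac1L\nabla f(y^k)-\frac1L s_g^{k+1}\). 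I must then check that this quadratic form is negative semidefinite, ideally by exhibiting it as minus a sum of squares.

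The main obstacle is this last verification, together with making the stationary multipliers exactly consistent. The \(x^k\) dependence has to be absorbed, since \(\Phi_k\) involves \(y^{k-1}\) but not \(x^k\) directly, and this requires expressing \(x^k\) through \(y^{k-1}\), \(\nabla f(y^{k-1})\) and \(s_g^k\). The natural first attempt is to pass to the limit in the \ProxITEM{} certificate. If that identity is an exact sum of squares for every \(k\), then dividing by the normalisation and letting \(k\to\infty\) gives a valid stationary certificate, because nonpositivity of a quadratic form is preserved under limits of its coefficients. This gives the contraction.
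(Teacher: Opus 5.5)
Your architecture matches the paper's. \(\Phi_k\) is exactly \(\mathcal{V}_k^{\infty}/\mu\) from \Cref{def:prox-tmm-lyapunov}, \(\Phi_1=c\), and the theorem follows by iterating the contraction from \(k=1\).

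\textbf{Where your route differs.} The difference is how the contraction is certified. The paper proves \Cref{lem:prox-tmm-one-step-lyapunov} by a separate direct symbolic verification. It uses the \(A_{k+1}\)-normalised limit of the \ProxITEM{} weights only as a heuristic. You instead derive the stationary certificate from \Cref{lem:prox-item-one-step-lyapunov} by passing to the limit. That is sound, and your ``if'' is met, since that lemma gives the exact identity \(\mathcal{S}_k=0\). It does, however, have to be done at the level of formal identities:
\begin{enumerate}
\item Fix the \(k\)-independent free quantities \(z^k,x^k,x^\star,\nabla f(y^{k-1}),\nabla f(y^k),\nabla f(x^\star),s_g^k,s_g^{k+1}\) and the six function values.
\item Express \(y^{k-1}=x^k+\frac{1}{L}(\nabla f(y^{k-1})+s_g^k)\), \(y^k\), \(z^{k+1}\) and \(s_g^\star\) linearly in these, with coefficients polynomial in \(\beta_k,\delta_k\).
\item Then the coefficients of \(\mathcal{S}_k/A_{k+1}\) are continuous functions of \((A_k/A_{k+1},1/A_{k+1},\beta_k,\delta_k,\sigma_k/A_{k+1})\).
\item They vanish for every \(k\), because the reduction to \(p_0,\dots,p_3\) is purely algebraic.
\item By \Cref{lem:prox-tmm-limiting-coefficients} they converge to the coefficients of \(\mathcal{S}_k^{\infty}\), which therefore vanish.
\end{enumerate}
Your route spares a second computer-algebra check. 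The paper's route is self-contained and does not depend on the nonstationary coefficients.

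\textbf{The explicit inequality list would fail.} In \(\Phi_{k+1}-(1-\sqrt{q})^2\Phi_k\) the coefficient of \(g(z^k)\) is \(+\frac{q}{\mu}(1-\sqrt{q})^2\). Among your \(g\)-inequalities, only \(\mathcal{I}_g(z^k,z^{k+1},s_g^{k+1})\) contains \(g(z^k)\), and it does so with a plus sign. Cancelling it would therefore require a negative multiplier; you need an inequality with \(z^k\) as the base point.

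The working certificate uses:
\begin{itemize}
\item the \(g\)-inequalities \(\mathcal{I}_g(z^{k+1},z^k,s_g^k)\), \(\mathcal{I}_g(z^k,x^\star,s_g^\star)\), \(\mathcal{I}_g(x^\star,z^{k+1},s_g^{k+1})\) and \(\mathcal{I}_g(z^{k+1},x^\star,s_g^\star)\);
\item the \(f\)-inequalities \(\mathcal{I}_f(x^\star,y^k)\) and \(\mathcal{I}_f(y^{k-1},y^k)\);
\item the squares \(\norm{s_g^{k+1}-s_g^\star}^2\) and \(\norm{s_g^k-s_g^{k+1}}^2\);
\item the weights of \eqref{eq:prox-tmm:one-step-lyapunov-remainder}, divided by \(\mu\).
\end{itemize}
With these, the residual quadratic form is exactly zero, not merely nonpositive. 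The limit of the \ProxITEM{} remainder produces precisely this list, so follow the limit rather than your guessed inequalities.
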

The proof of \Cref{thm:main:prox_tmm} is given in
\Cref{sec:proof_main_prox_tmm}.

\begin{remark}
    When \(g=0\), the rate in \Cref{thm:main:prox_tmm}\ifsiopt\hfil\penalty-500\hfilneg\fi\space
    matches the TMM rate in \mbox{\cite[Theorem~1]{van_scoy_freeman_lynch_2018_tmm}}.
\end{remark}


\section{Auxiliary estimates for the convergence analyses}\label{sec:auxiliary_estimates_convergence_analyses}

This section develops the Lyapunov analysis underlying the convergence results for \ProxITEM{} and \ProxTMM{}\@.
The two subsections treat the nonstationary method and its stationary limit, respectively.


\subsection{Lyapunov analysis for \texorpdfstring{\ProxITEM}{Prox-ITEM}}\label{subsec:prox-item-lyapunov-analysis}

\begin{definition}[Lyapunov function]\label{def:prox-item-lyapunov}
    \emergencystretch=2em\relax
    Suppose that \Cref{ass:main} holds.
    Let~\(((x^k,\allowbreak y^k,\allowbreak z^k,\allowbreak \bar z^{k+1},\allowbreak A_k,\allowbreak \delta_k))_{k\in\naturals}\) be generated by \refProxITEM{}\@.
    Let~\(x^{\star}\in\argmin_{x\in\calH}\allowbreak\p{f\p{x} + g\p{x}}\).
    Define
    \begin{equation}\label{eq:prox-item:lyapunov}
        \begin{aligned}
            \p{\forall k\in\naturals}\quad
            \mathcal{V}_k
            ={}&{}
            \p{1-q}A_k\mathcal{I}_f\p{y^{k-1},x^{\star}}
            +qA_k\mathcal{I}_g\p{x^{\star},z^k,s_g^k} \\
            &{}+
            \frac{A_k}{2L}\norm{s_g^k-s_g^{\star}}^2
            +
            \p{L+\mu A_k}\norm{z^k-x^{\star}}^2,
        \end{aligned}
    \end{equation}
    where
    \begin{align}
        \p{\forall k\in\N}\quad
        s_g^k
        &=
        L\delta_{k-1}^{-1}\p{\bar z^k-z^k}
        \in
        \partial g\p{z^k},
        \label{eq:prox-item:sampled-subgradient}\\
        s_{g}^{\star}
        &=
        - \nabla f\p{x^{\star}}
        \in
        \partial g\p{x^{\star}}.
        \label{eq:prox-item:optimal-subgradient}
    \end{align}
    The inclusion in \eqref{eq:prox-item:sampled-subgradient} follows from \eqref{eq:prox-item:z-update} and \eqref{eq:prox_characterization}.
    The quantities \(\mathcal{I}_f\) and \(\mathcal{I}_g\) in \eqref{eq:prox-item:lyapunov} are defined in \eqref{eq:smooth_interpolation_inequality} and \eqref{eq:convex_interpolation_inequality}, respectively.
    In what follows, we use the conventions \(y^{-1}=y^0\), \(s_g^0=s_g^1\), \(0\cdot\p{+\infty}=0\), and \(0\cdot\p{-\infty}=0\).
\end{definition}

\begin{remark}
    \begin{enumeratremark}
        \item When \(g=0\), the proximal operator reduces to the identity mapping; therefore, both the \(\mathcal{I}_g\)-term and the subgradient-norm term in \eqref{eq:prox-item:lyapunov} vanish.
        The resulting Lyapunov function is the ITEM Lyapunov function (4) in \cite[Section~2.1]{taylor2023optimal_gradient}, up to notation, with \(\mathcal{I}_f\p{y^{k-1},x^{\star}}\) corresponding to their \(\psi_{k-1}\).
        \item\label{rem:prox-item-initial-lyapunov} With the conventions in \Cref{def:prox-item-lyapunov}, \(A_0=0\), and \(z^0=x^0\), \eqref{eq:prox-item:lyapunov} gives
        \(\mathcal{V}_0=L\norm{x^0-x^{\star}}^2\).
    \end{enumeratremark}
\end{remark}

\begin{lemma}\label{lem:prox-item-nonnegative-coefficients}
    Let~\(\p{A_k,\delta_k}_{k\in\naturals}\)\ifsiopt\hfil\penalty-500\hfilneg\fi\space
    be generated by \refProxITEM, \mbox{and define}
    \begin{equation}\label{eq:prox-item:sigma}
        \p{\forall k\in\naturals}\quad
        \sigma_k
        =
        \sqrt{\p{1+A_k}\p{1+qA_k}}.
    \end{equation}
    Then
    \begin{empheq}[left={\p{\forall k\in\naturals}\quad\empheqlbrack}]{align}
        A_k
        &\geq 0,
        \label{eq:prox-item:nonnegative-A}\\
        A_{k+1}
        &> 0,
        \label{eq:prox-item:positive-next-A}\\
        \delta_{k}
        &> 0,
        \label{eq:prox-item:positive-delta}\\
        A_{k+1}-A_k
        &\geq 0,
        \label{eq:prox-item:nonnegative-A-increment}\\
        \p{1+q}A_{k+1}-A_k-\sigma_k+1
        &\geq 0,
        \label{eq:prox-item:nonnegative-g-coefficient-two}\\
        \sigma_k-1-qA_k
        &\geq 0.
        \label{eq:prox-item:nonnegative-sigma-minus-one}
    \end{empheq}
\end{lemma}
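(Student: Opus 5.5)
Each of the six inequalities reduces to an elementary fact about the scalar recursion \eqref{eq:prox-item:A-recursion}, using only \(0<q<1\). We will argue by induction on \(k\), using \(A_0=0\) as the base case.

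\textbf{First four inequalities.} Suppose \(A_k\geq 0\). Then \(\sigma_k\geq 1\), so the numerator in \eqref{eq:prox-item:A-recursion} satisfies
\[
(1+q)A_k+2(1+\sigma_k)\geq 4>0,
\]
and the denominator \((1-q)^2\) is positive. Hence \(A_{k+1}>0\), which gives \eqref{eq:prox-item:positive-next-A} and completes the induction for \eqref{eq:prox-item:nonnegative-A}. Since \(A_{k+1}>0\), \(\delta_k\) is the square root of a positive quantity, which gives \eqref{eq:prox-item:positive-delta}. For \eqref{eq:prox-item:nonnegative-A-increment}, note that \((1-q)^2\leq 1\) and \((1+q)A_k\geq A_k\), so
\[
A_{k+1}\geq (1+q)A_k+2(1+\sigma_k)\geq A_k.
\]

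\textbf{Inequality \eqref{eq:prox-item:nonnegative-sigma-minus-one}.} Both sides are nonnegative, so it suffices to compare squares:
\[
\sigma_k^2=(1+A_k)(1+qA_k)\geq (1+qA_k)^2,
\]
because \(1+A_k\geq 1+qA_k\).

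\textbf{Inequality \eqref{eq:prox-item:nonnegative-g-coefficient-two}.} This is the only one that uses the precise form of the recursion. We first use \((1+q)A_{k+1}\geq A_{k+1}\). Since \(0<(1-q)^2\leq 1\), we also have
\[
A_{k+1}\geq (1+q)A_k+2+2\sigma_k.
\]
Therefore
\[
(1+q)A_{k+1}-A_k-\sigma_k+1\geq qA_k+3+\sigma_k>0.
\]
So even this inequality follows from the crude bounds \((1-q)^2\leq 1\), \(A_k\geq 0\), and \(\sigma_k\geq 1\); the only point needing care is that the sign of \(-\sigma_k\) is absorbed by the \(2\sigma_k\) coming from the recursion. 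If a sharper form were ever needed, one would instead substitute the exact expression for \(A_{k+1}\). The steps must be taken in the order \eqref{eq:prox-item:nonnegative-A} first and the rest afterwards, since all of them rely on \(A_k\geq 0\).
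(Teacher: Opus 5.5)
Your proof is correct and has the same structure as the paper's: the same induction from $A_0=0$ via $\sigma_k\geq 1$, and the same squaring argument $\sigma_k^2\geq(1+qA_k)^2$ for the last inequality. The only difference is in the increment bound and the bound on $(1+q)A_{k+1}-A_k-\sigma_k+1$: you use the crude estimate $(1-q)^{-2}\geq 1$, whereas the paper multiplies through by $(1-q)^2$ and obtains the exact expressions $q(3-q)A_k+2(1+\sigma_k)$ and $4qA_k+(3+q^2)+(1+4q-q^2)\sigma_k$. Both routes are valid.
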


\begin{proof}
    Since \refProxITEM{} sets \(q=\mu/L\) with \(0<\mu<L\), we have \(q\in\p{0,1}\).
    Moreover, \(A_0=0\).
    We first prove \eqref{eq:prox-item:nonnegative-A} and \eqref{eq:prox-item:positive-next-A}.
    It suffices to show that \(A_k\geq 0\) implies \(A_{k+1}>0\).
    Under this assumption, \eqref{eq:prox-item:sigma} gives
    \begin{equation*}
        \sigma_k
        =
        \sqrt{\p{1+A_k}\p{1+qA_k}}
        \geq 1,
    \end{equation*}
    and hence \eqref{eq:prox-item:A-recursion} gives
    \begin{equation*}
        A_{k+1}
        =
        \frac{\p{1+q}A_k+2\p{1+\sigma_k}}{\p{1-q}^2}
        \geq
        \frac{4}{\p{1-q}^2}
        > 0.
    \end{equation*}
    Therefore, by induction, \(A_k\geq 0\) and \(A_{k+1}>0\) for each \(k\in\naturals\).
    Then \eqref{eq:prox-item:delta} gives
    \begin{equation*}
        \delta_k
        =
        \sqrt{\frac{A_{k+1}}{1+qA_{k+1}}}
        >
        0,
    \end{equation*}
    which proves \eqref{eq:prox-item:positive-delta}.
    For each \(k\in\naturals\), substituting \eqref{eq:prox-item:A-recursion} gives
    \begin{equation*}
        \p{1-q}^{2}\p{A_{k+1}-A_k}
        =
        q\p{3-q}A_k+2\p{1+\sigma_k}
        \geq 0.
    \end{equation*}
    Since \(\p{1-q}^{2}>0\), this proves \eqref{eq:prox-item:nonnegative-A-increment}.
    Another substitution of \eqref{eq:prox-item:A-recursion} gives
    \begin{equation*}
        \begin{aligned}
            \p{1-q}^{2}
            \p{\p{1+q}A_{k+1}-A_k-\sigma_k+1}
            =
            4qA_k+\p{3+q^2}+\p{1+4q-q^2}\sigma_k
            \geq 0.
        \end{aligned}
    \end{equation*}
    Since \(\p{1-q}^{2}>0\), this proves \eqref{eq:prox-item:nonnegative-g-coefficient-two}.
    Finally, applying \eqref{eq:prox-item:sigma} with \(A_k\geq 0\) gives \(\sigma_k^2-\p{1+qA_k}^2=\p{1-q}A_k\p{1+qA_k}\geq 0\), and hence \(\sigma_k\geq 1+qA_k\), which proves \eqref{eq:prox-item:nonnegative-sigma-minus-one}.
\end{proof}

\begin{remark}
    \begin{enumeratremark}
        \item By \eqref{eq:prox-item:positive-delta}, the proximal parameter \(\delta_k/L\) in \eqref{eq:prox-item:z-update} is positive for each \(k\in\naturals\), and the denominator \(\delta_{k-1}\) in \eqref{eq:prox-item:sampled-subgradient} is positive for each \(k\in\N\).
        Thus, \eqref{eq:prox-item:z-update} and \eqref{eq:prox-item:sampled-subgradient} are well defined.
        \item The conventions in \Cref{def:prox-item-lyapunov}, together with \eqref{eq:smooth_interpolation_inequality}, \eqref{eq:convex_interpolation_inequality:nonnegative}, and \eqref{eq:prox-item:nonnegative-A}, imply that each term on the right-hand side of \eqref{eq:prox-item:lyapunov} is nonnegative. Thus, \(\mathcal{V}_k\in\reals_+\) for each \(k\in\naturals\).
        \item\label{rem:prox-item-distance-estimate} In particular, retaining only the last term in \eqref{eq:prox-item:lyapunov} gives
        \begin{equation*}
            \p{\forall k\in\naturals}\quad
            \p{L+\mu A_k}\norm{z^k-x^{\star}}^2
            \leq
            \mathcal{V}_k.
        \end{equation*}
    \end{enumeratremark}
\end{remark}

\begin{lemma}[Lyapunov equality]\label{lem:prox-item-one-step-lyapunov}
    \sloppy
    Suppose that \Cref{ass:main} holds.
    Let~\(\p{\mathcal{V}_k}_{k\in\naturals}\) be as in \Cref{def:prox-item-lyapunov}.
    Then
    \begin{equation}\label{eq:prox-item:one-step-lyapunov}
        \p{\forall k\in\naturals}\quad
        \mathcal{V}_{k+1}
        =
        \mathcal{V}_k-\mathcal{R}_k,
    \end{equation}
    where
    \begin{align}
        \p{\forall k\in\naturals}\quad
        \mathcal{R}_k
        ={}&{}
        \frac{A_{k+1}-A_k}{2L}\norm{s_g^{k+1}-s_g^{\star}}^2
        +\frac{A_k}{2L}\norm{s_g^k-s_g^{k+1}}^2 \notag\\
        {}&{}+
        \p{1-q}\p{A_{k+1}-A_k}\mathcal{I}_f\p{x^{\star},y^k}
        +\p{1-q}A_k\mathcal{I}_f\p{y^{k-1},y^k} \notag\\
        {}&{}+
        \p{A_{k+1}-A_k}\mathcal{I}_g\p{x^{\star},z^{k+1},s_g^{k+1}} \notag\\
        {}&{}+
        \p{\sigma_k-1-qA_k}\mathcal{I}_g\p{z^k,x^{\star},s_g^{\star}} \notag\\
        {}&{}+
        \p{\p{1+q}A_{k+1}-A_k-\sigma_k+1}
        \mathcal{I}_g\p{z^{k+1},x^{\star},s_g^{\star}} \notag\\
        {}&{}+
        \p{\sigma_k-1}\mathcal{I}_g\p{z^{k+1},z^k,s_g^k}.
        \label{eq:prox-item:one-step-lyapunov-remainder}
    \end{align}
    Moreover, \(\mathcal{R}_k\in\reals_+\) for each \(k\in\naturals\).
\end{lemma}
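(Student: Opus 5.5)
The identity \eqref{eq:prox-item:one-step-lyapunov} is an algebraic identity among iterates, so I will prove it by direct expansion. Nonnegativity of \(\mathcal{R}_k\) then follows from \Cref{lem:prox-item-nonnegative-coefficients} together with \eqref{eq:smooth_interpolation_inequality} and \eqref{eq:convex_interpolation_inequality:nonnegative}.

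\textbf{Nonnegativity.} Each term of \eqref{eq:prox-item:one-step-lyapunov-remainder} is a nonnegative coefficient times a nonnegative quantity.
\begin{itemize}[label={}]
\item The coefficients are \(A_{k+1}-A_k\), \(A_k\), \(\sigma_k-1-qA_k\), \((1+q)A_{k+1}-A_k-\sigma_k+1\), and \(\sigma_k-1\geq qA_k\geq0\). Their nonnegativity is exactly \eqref{eq:prox-item:nonnegative-A}, \eqref{eq:prox-item:nonnegative-A-increment}, \eqref{eq:prox-item:nonnegative-g-coefficient-two}, and \eqref{eq:prox-item:nonnegative-sigma-minus-one}.
\item The \(\mathcal{I}_f\)-terms are nonnegative by \eqref{eq:smooth_interpolation_inequality}.
\item The \(\mathcal{I}_g\)-terms are evaluated at subgradient pairs \((z^{k+1},s_g^{k+1})\), \((x^\star,s_g^\star)\), \((z^k,s_g^k)\), so they are nonnegative by \eqref{eq:convex_interpolation_inequality:nonnegative}. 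The points lie in \(\dom\partial g\) by \eqref{eq:prox-item:sampled-subgradient} and \eqref{eq:prox-item:optimal-subgradient}.
\item For \(k=0\), every term carrying a \(z^0\)- or \(s_g^0\)-dependence has coefficient \(A_0=0\) or \(\sigma_0-1-qA_0=0\). The conventions \(0\cdot(\pm\infty)=0\) handle the case \(z^0\notin\dom g\).
\end{itemize}

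\textbf{The identity.} The plan is to show that \(\mathcal{V}_{k+1}-\mathcal{V}_k+\mathcal{R}_k\) vanishes identically.
\begin{enumerate}
\item Collect the function values. Write everything in terms of \(f(y^{k-1}),f(y^k),f(x^\star),g(z^k),g(z^{k+1}),g(x^\star)\), gradients \(\nabla f(y^{k-1}),\nabla f(y^k),\nabla f(x^\star)\), subgradients \(s_g^k,s_g^{k+1},s_g^\star\), and points \(y^{k-1},y^k,z^k,z^{k+1},x^\star\).
\item Check that the function values cancel. The coefficients of each \(f\)- and \(g\)-value sum to zero; this is a bookkeeping check. For example, \(g(z^{k+1})\) appears with coefficient \(qA_{k+1}\) from \(-\mathcal{V}_{k+1}\), and the \(\mathcal{R}_k\) coefficients \(-(A_{k+1}-A_k)+((1+q)A_{k+1}-A_k-\sigma_k+1)+(\sigma_k-1)\) sum to \(qA_{k+1}\). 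The signs then cancel, and \(g(z^k)\) and \(f\) work similarly. This step should be straightforward.
\item Eliminate dependent points. What remains is a quadratic form in the vectors. Use the algorithm to express \(z^{k+1}\), \(y^k\), and \(x^k\) via \eqref{eq:prox-item:extrapolation}, \eqref{eq:prox-item:prox-point}, \eqref{eq:prox-item:z-update}, and \eqref{eq:prox-item:x-update}. In particular, \(z^{k+1}=(1-q\delta_k)z^k+q\delta_k y^k-\frac{\delta_k}{L}(\nabla f(y^k)+s_g^{k+1})\).
\item Eliminate \(x^k\). Since \(x^k=y^{k-1}-\frac1L\nabla f(y^{k-1})-\frac1L s_g^k\), substituting it into \(y^k\) expresses \(y^k\) through \(z^k,y^{k-1},\nabla f(y^{k-1}),s_g^k\).
\item Reduce to coefficient checks. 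Shift to \(x^\star\) as origin and use \(\nabla f(x^\star)=-s_g^\star\). The remaining quadratic form is in the independent vectors \(z^k-x^\star\), \(y^{k-1}-x^\star\), \(\nabla f(y^{k-1})-\nabla f(x^\star)\), \(\nabla f(y^k)-\nabla f(x^\star)\), \(s_g^k-s_g^\star\), \(s_g^{k+1}-s_g^\star\). Verify that each coefficient of the Gram matrix vanishes.
\end{enumerate}

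\textbf{Main obstacle.} Step 5 is the hard part: about 21 scalar coefficient identities, each involving \(A_k,A_{k+1},\beta_k,\delta_k,\sigma_k,q\). I would reduce them using \(\delta_k^2=A_{k+1}/(1+qA_{k+1})\), \(\beta_k=A_k/((1-q)A_{k+1})\), and the recursion \eqref{eq:prox-item:A-recursion}, rewritten as the quadratic relation between \(A_{k+1}\) and \(\sigma_k\). The square root \(\delta_k\) appears linearly in some coefficients, and these must cancel via the identity relating \(\delta_k\), \(A_{k+1}\), and \(\sigma_k\). I would first extract this identity, \((1-q)A_{k+1}-(1+q)A_k-2=2\sigma_k\) combined with \(\sigma_k^2=(1+A_k)(1+qA_k)\), and then check each coefficient symbolically.

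As a sanity check, setting all \(s_g\) to zero (the case \(g=0\)) must recover the ITEM identity of \cite{taylor2023optimal_gradient}. That reduces the genuinely new work to the coefficients involving \(s_g^k\) and \(s_g^{k+1}\). In practice I would verify these with computer algebra, then record the resulting sum-of-terms decomposition.
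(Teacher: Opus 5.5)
Your proposal is correct and follows the paper's proof. Nonnegativity of $\mathcal{R}_k$ comes from \Cref{lem:prox-item-nonnegative-coefficients} and the interpolation inequalities; the identity comes from cancelling all function values in $\mathcal{V}_{k+1}-\mathcal{V}_k+\mathcal{R}_k$ and substituting the update rules. The paper differs only in bookkeeping: it keeps $x^k$ and $z^{k+1}$ as variables, eliminating $y^{k-1}$, $\nabla f(y^{k-1})$ and $\nabla f(y^k)$ instead, which packs the leftover quadratic form into four scalar coefficients $p_0,\dots,p_3$ rather than your roughly twenty-one Gram entries.

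Two small corrections. First, the recursion reads $(1-q)^2A_{k+1}-(1+q)A_k-2=2\sigma_k$; your version is missing the square. Second, the coefficients linear in $\delta_k$ vanish through $\delta_k(\sigma_k-1)=A_k$ and $\delta_k\bigl((1+q)A_{k+1}-A_k\bigr)=2A_{k+1}$. These follow from the squared relations only after using the signs in \Cref{lem:prox-item-nonnegative-coefficients} to pick the positive root.
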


\begin{proof}
    First, note that, for each \(k\in\naturals\), \eqref{eq:smooth_interpolation_inequality}, \eqref{eq:convex_interpolation_inequality:nonnegative}, \Cref{lem:prox-item-nonnegative-coefficients}, and the conventions in \Cref{def:prox-item-lyapunov} give
    \begin{align*}
        \mathcal{R}_k
        ={}&{}
        \underbracket{\frac{A_{k+1}-A_k}{2L}}_{\geq 0}
        \underbracket{\norm{s_g^{k+1}-s_g^{\star}}^2}_{\geq 0}
        +\underbracket{\frac{A_k}{2L}}_{\geq 0}
        \underbracket{\norm{s_g^k-s_g^{k+1}}^2}_{\geq 0} \notag\\
        {}&{}+
        \underbracket{\p{1-q}\p{A_{k+1}-A_k}}_{\geq 0}
        \underbracket{\mathcal{I}_f\p{x^{\star},y^k}}_{\geq 0}
        +\underbracket{\p{1-q}A_k}_{\geq 0}
        \underbracket{\mathcal{I}_f\p{y^{k-1},y^k}}_{\geq 0} \notag\\
        {}&{}+
        \underbracket{\p{A_{k+1}-A_k}}_{\geq 0}
        \underbracket{\mathcal{I}_g\p{x^{\star},z^{k+1},s_g^{k+1}}}_{\geq 0} \notag\\
        {}&{}+
        \underbracket{\p{\sigma_k-1-qA_k}\mathcal{I}_g\p{z^k,x^{\star},s_g^{\star}}}_{\mathclap{\substack{=0\text{ if } k=0\\ \geq 0\text{ if } k\in\N}}} \notag\\
        {}&{}+
        \underbracket{\p{\p{1+q}A_{k+1}-A_k-\sigma_k+1}}_{\geq 0}
        \underbracket{\mathcal{I}_g\p{z^{k+1},x^{\star},s_g^{\star}}}_{\geq 0} \notag\\
        {}&{}+
        \underbracket{\p{\sigma_k-1}\mathcal{I}_g\p{z^{k+1},z^k,s_g^k}}_{\mathclap{\substack{=0\text{ if } k=0\\ \geq 0\text{ if } k\in\N}}}
        \in\reals_+.
    \end{align*}

    Next, define
    \begin{equation}\label{eq:prox-item:one-step-lyapunov-slack-definition}
        \p{\forall k\in\naturals}\quad
        \mathcal{S}_k
        =
        \mathcal{V}_{k+1}-\mathcal{V}_k+\mathcal{R}_k;
    \end{equation}
    it remains to show that \(\mathcal{S}_k=0\).

    Substituting \eqref{eq:prox-item:lyapunov} and \eqref{eq:prox-item:one-step-lyapunov-remainder} into \(\mathcal{S}_k\), and using \eqref{eq:smooth_interpolation_inequality}, the coefficients of the function values in the \(\mathcal{I}_f\)-terms are
    \begin{align*}
        f\p{y^{k-1}}:
        {}&{}
        -\p{1-q}A_k+\p{1-q}A_k=0, \\
        f\p{y^k}:
        {}&{}
        \p{1-q}A_{k+1}-\p{1-q}\p{A_{k+1}-A_k}-\p{1-q}A_k=0, \\
        f\p{x^{\star}}:
        {}&{}
        -\p{1-q}A_{k+1}+\p{1-q}A_k+\p{1-q}\p{A_{k+1}-A_k}=0.
    \end{align*}
    Similarly, using \eqref{eq:convex_interpolation_inequality}, the coefficients of the function values in the \(\mathcal{I}_g\)-terms are
    \begin{align*}
        g\p{x^{\star}}:
        {}&{}
        qA_{k+1}-qA_k+\p{A_{k+1}-A_k}
        -\p{\sigma_k-1-qA_k} \\
        &{}-
        \p{\p{1+q}A_{k+1}-A_k-\sigma_k+1}=0, \\
        g\p{z^{k+1}}:
        {}&{}
        -qA_{k+1}-\p{A_{k+1}-A_k}
        +\p{\p{1+q}A_{k+1}-A_k-\sigma_k+1} \\
        &{}+
        \p{\sigma_k-1}=0, \\
        g\p{z^k}:
        {}&{}
        qA_k+\p{\sigma_k-1-qA_k}-\p{\sigma_k-1}=0.
    \end{align*}
    Thus, all function values in \(\mathcal{S}_k\) cancel.

    Expanding the remaining parts of the interpolation inequalities gives
    \begin{align}
        \mathcal{S}_k
        ={}&{}
        \frac{A_{k+1}-A_k}{2L}\norm{s_g^{k+1}-s_g^{\star}}^2
        +\frac{A_k}{2L}\norm{s_g^k-s_g^{k+1}}^2 \notag\\
        {}&{}
        -\p{1-q}\p{A_{k+1}-A_k}\p{
            \inner{\nabla f\p{y^k}}{x^{\star}-y^k}
            +\frac{\mu}{2}\norm{x^{\star}-y^k}^{2}
        } \notag\\
        {}&{}
        -\frac{\p{1-q}\p{A_{k+1}-A_k}}{2\p{L-\mu}}
        \norm{\nabla f\p{x^{\star}}-\nabla f\p{y^k}-\mu\p{x^{\star}-y^k}}^2 \notag\\
        {}&{}
        -\p{1-q}A_k\p{
            \inner{\nabla f\p{y^k}}{y^{k-1}-y^k}
            +\frac{\mu}{2}\norm{y^{k-1}-y^k}^{2}
        } \notag\\
        {}&{}
        -\frac{\p{1-q}A_k}{2\p{L-\mu}}
        \norm{\nabla f\p{y^{k-1}}-\nabla f\p{y^k}-\mu\p{y^{k-1}-y^k}}^2 \notag\\
        {}&{}
        -\p{1-q}A_{k+1}\p{
            \inner{\nabla f\p{x^{\star}}}{y^k-x^{\star}}
            +\frac{\mu}{2}\norm{y^k-x^{\star}}^{2}
        } \notag\\
        {}&{}
        -\frac{\p{1-q}A_{k+1}}{2\p{L-\mu}}
        \norm{\nabla f\p{y^k}-\nabla f\p{x^{\star}}-\mu\p{y^k-x^{\star}}}^2 \notag\\
        {}&{}
        +\p{1-q}A_k\p{
            \inner{\nabla f\p{x^{\star}}}{y^{k-1}-x^{\star}}
            +\frac{\mu}{2}\norm{y^{k-1}-x^{\star}}^{2}
        } \notag\\
        {}&{}
        +\frac{\p{1-q}A_k}{2\p{L-\mu}}
        \norm{\nabla f\p{y^{k-1}}-\nabla f\p{x^{\star}}-\mu\p{y^{k-1}-x^{\star}}}^2 \notag\\
        {}&{}
        -\p{\p{1+q}A_{k+1}-A_k}\inner{s_g^{k+1}}{x^{\star}-z^{k+1}}
        +qA_k\inner{s_g^k}{x^{\star}-z^k} \notag\\
        {}&{}
        -\p{\p{1+q}A_{k+1}-A_k-\sigma_k+1}
        \inner{s_g^{\star}}{z^{k+1}-x^{\star}} \notag\\
        {}&{}
        -\p{\sigma_k-1-qA_k}\inner{s_g^{\star}}{z^k-x^{\star}}
        -\p{\sigma_k-1}\inner{s_g^k}{z^{k+1}-z^k} \notag\\
        {}&{}
        -\frac{A_k}{2L}\norm{s_g^k-s_g^{\star}}^2
        +\frac{A_{k+1}}{2L}\norm{s_g^{k+1}-s_g^{\star}}^2 \notag\\
        {}&{}
        +\p{L+\mu A_{k+1}}\norm{z^{k+1}-x^{\star}}^2
        -\p{L+\mu A_k}\norm{z^k-x^{\star}}^2.
        \label{eq:prox-item:one-step-lyapunov-raw-slack}
    \end{align}

    Using \(q=\mu/L\), \eqref{eq:prox-item:extrapolation},
    \eqref{eq:prox-item:prox-point}, and \eqref{eq:prox-item:x-update},
    together with \eqref{eq:prox-item:sampled-subgradient},
    \eqref{eq:prox-item:optimal-subgradient}, and the conventions in
    \Cref{def:prox-item-lyapunov}, and leaving \(A_{k+1}\), \(\beta_k\),
    \(\delta_k\), and \(\sigma_k\) unevaluated, we obtain from
    \eqref{eq:prox-item:one-step-lyapunov-raw-slack} after direct
    simplification\footnote{\raggedright Symbolic verification notebooks are available at
    \href{https://github.com/PerformanceEstimation/ProxITEM}{\nolinkurl{https://github.com/PerformanceEstimation/}}\allowbreak\href{https://github.com/PerformanceEstimation/ProxITEM}{\mbox{\nolinkurl{ProxITEM}}}.\par}
    \begin{align*}
        \mathcal{S}_k
        &={}
        p_0
        \inner{z^k-x^k}{
            L\delta_k^{-1}\p{z^k-z^{k+1}}
            +qL\p{x^{\star}-z^k}
            +s_g^{\star}-s_g^{k+1}
        }
        \\
        &\quad{}+
        \inner{z^k-z^{k+1}}{\eta^k},
    \end{align*}
    where
    \begin{align*}
        \eta^k
        &={}
        p_1\delta_k^{-2}
        \p{
            L\p{x^{\star}-z^k}
            +\delta_k\p{s_g^{k+1}-s_g^{\star}}
        }
        +p_2\delta_k^{-1}\p{s_g^k-s_g^{\star}} \\
        &\quad{}+
        p_3\delta_k^{-2}L\p{z^k+z^{k+1}-2x^{\star}},
    \end{align*}
    and
    \begin{align*}
        p_0
        &={}
        A_k-\p{1-q}A_{k+1}\beta_k, \\
        p_1
        &={}
        \delta_k\p{A_k-\p{1+q}A_{k+1}}
        +2A_{k+1}, \\
        p_2
        &={}
        \delta_k\p{\sigma_k-1}
        -A_k, \\
        p_3
        &={}
        A_{k+1}
        -\p{1+qA_{k+1}}\delta_k^2.
    \end{align*}

    \begin{sloppypar}
        Each polynomial coefficient vanishes for the parameters generated by
        \refProxITEM{}.
        For \(p_0=0\), use the definition of \(\beta_k\) in
        \eqref{eq:prox-item:beta}.
        For \(p_1=p_2=0\), use
        \(\delta_k^2=A_{k+1}\p{1+qA_{k+1}}^{-1}\) from
        \eqref{eq:prox-item:delta}, the recursion for \(A_{k+1}\) from
        \eqref{eq:prox-item:A-recursion}, and
        \(\sigma_k^2=\p{1+A_k}\p{1+qA_k}\) from
        \eqref{eq:prox-item:sigma}.
        For \(p_3=0\), only the substitution
        \(\delta_k^2=A_{k+1}\p{1+qA_{k+1}}^{-1}\) from
        \eqref{eq:prox-item:delta} is needed.
        Therefore,
        \begin{equation}
            \p{\forall k\in\naturals}\quad
            \mathcal{S}_k=0.
            \label{eq:prox-item:one-step-lyapunov-slack-zero}
        \end{equation}
    \end{sloppypar}
    Combining \eqref{eq:prox-item:one-step-lyapunov-slack-definition} with
    \eqref{eq:prox-item:one-step-lyapunov-slack-zero} gives
    \eqref{eq:prox-item:one-step-lyapunov}.
\end{proof}


\subsection{Limiting-coefficient and Lyapunov analysis for \texorpdfstring{\ProxTMM}{Prox-TMM}}\label{subsec:prox-tmm-limiting-coefficient-lyapunov-analysis}

\begin{lemma}\label{lem:prox-tmm-limiting-coefficients}
    Suppose that \Cref{ass:main} holds.
    Let~\(((A_k,\allowbreak \beta_k,\allowbreak \delta_k))_{k\in\naturals}\) be generated by \refProxITEM, and let \(\p{\sigma_k}_{k\in\naturals}\) be defined by \eqref{eq:prox-item:sigma}.
    Then
    \begin{align}
        A_k
        &\xrightarrow[k\to\infty]{}
        \infty,
        \label{eq:prox-tmm:limit-A-infinity}\\
        \frac{A_k}{A_{k+1}}
        &\xrightarrow[k\to\infty]{}
        \p{1-\sqrt{q}}^{2},
        \label{eq:prox-tmm:limit-A-ratio}\\
        \beta_k
        &\xrightarrow[k\to\infty]{}
        \frac{1-\sqrt{q}}{1+\sqrt{q}},
        \label{eq:prox-tmm:limit-beta}\\
        \delta_k
        &\xrightarrow[k\to\infty]{}
        \frac{1}{\sqrt{q}},
        \label{eq:prox-tmm:limit-delta}\\
        \frac{\sigma_k}{A_k}
        &\xrightarrow[k\to\infty]{}
        \sqrt{q},
        \label{eq:prox-tmm:limit-sigma-self-ratio}\\
        \frac{\sigma_k}{A_{k+1}}
        &\xrightarrow[k\to\infty]{}
        \sqrt{q}\p{1-\sqrt{q}}^{2}.
        \label{eq:prox-tmm:limit-sigma-ratio}
    \end{align}
\end{lemma}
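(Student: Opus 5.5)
The plan is to establish the six limits in the order stated, using only the recursion \eqref{eq:prox-item:A-recursion} and the elementary facts of \Cref{lem:prox-item-nonnegative-coefficients}.

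\textbf{Divergence of \(A_k\).} In the proof of \Cref{lem:prox-item-nonnegative-coefficients} we have \(\p{1-q}^2\p{A_{k+1}-A_k}=q\p{3-q}A_k+2\p{1+\sigma_k}\geq 4\). Hence \(A_{k+1}-A_k\geq 4/\p{1-q}^2\), so \(A_k\geq 4k/\p{1-q}^2\to\infty\). This gives \eqref{eq:prox-tmm:limit-A-infinity}.

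\textbf{Ratio limits.} Next, \(\sigma_k/A_k=\sqrt{\p{A_k^{-1}+1}\p{A_k^{-1}+q}}\to\sqrt{q}\), which is \eqref{eq:prox-tmm:limit-sigma-self-ratio}. Dividing the recursion by \(A_k\) gives
\[
\frac{A_{k+1}}{A_k}=\frac{1+q+2A_k^{-1}+2\sigma_k/A_k}{\p{1-q}^2}\to\frac{1+q+2\sqrt q}{\p{1-q}^2}=\frac{\p{1+\sqrt q}^2}{\p{1-\sqrt q}^2\p{1+\sqrt q}^2}=\frac{1}{\p{1-\sqrt q}^2},
\]
so \eqref{eq:prox-tmm:limit-A-ratio} holds. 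The remaining limits are continuity arguments built on these two:
\begin{itemize}
\item \(\beta_k=\p{1-q}^{-1}A_k/A_{k+1}\to\p{1-\sqrt q}^2/\p{1-q}=\p{1-\sqrt q}/\p{1+\sqrt q}\), which is \eqref{eq:prox-tmm:limit-beta};
\item \(\delta_k^2=1/\p{A_{k+1}^{-1}+q}\to 1/q\), and since \(\delta_k>0\) this gives \eqref{eq:prox-tmm:limit-delta};
\item \(\sigma_k/A_{k+1}=\p{\sigma_k/A_k}\p{A_k/A_{k+1}}\to\sqrt q\p{1-\sqrt q}^2\), which is \eqref{eq:prox-tmm:limit-sigma-ratio}.
\end{itemize}

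\textbf{Expected obstacle.} The only substantive step is the divergence \eqref{eq:prox-tmm:limit-A-infinity}, since every ratio limit relies on \(A_k^{-1}\to0\). The uniform lower bound on the increment removes the difficulty. One caveat is \(k=0\), where \(A_0=0\) and \(A_k/A_{k+1}\) is trivially \(0\); the ratios are well defined for every \(k\) because \(A_{k+1}>0\), so only large \(k\) matters.
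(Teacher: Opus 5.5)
Your proposal is correct and follows the paper's proof essentially step for step. The uniform increment bound \(A_{k+1}-A_k\geq 4/\p{1-q}^2\) gives divergence, then \(\sigma_k/A_k\to\sqrt{q}\), the recursion divided by \(A_k\) gives the ratio limit, and the remaining three limits follow by continuity. One small point: \(\sigma_k/A_k\) is undefined at \(k=0\) because \(A_0=0\), so that ratio should be restricted to \(k\in\N\), as the paper does; this does not affect any of the limits.
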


\begin{proof}
    Since \(q=\mu/L\), \Cref{ass:main} gives \(q\in\p{0,1}\).
    By \Cref{lem:prox-item-nonnegative-coefficients}, \(A_k\geq 0\) and
    \(\sigma_k\geq 1\) for each \(k\in\naturals\).
    Subtracting \(A_k\) from \eqref{eq:prox-item:A-recursion} gives
    \begin{equation*}
        A_{k+1}-A_k
        =
        \frac{q\p{3-q}A_k+2\p{1+\sigma_k}}{\p{1-q}^2}
        \geq
        \frac{4}{\p{1-q}^2}.
    \end{equation*}
    Hence \(A_k\to\infty\), which proves
    \eqref{eq:prox-tmm:limit-A-infinity}.

    For \(k\in\N\), using \eqref{eq:prox-item:sigma},
    \begin{equation*}
        \frac{\sigma_k}{A_k}
        =
        \sqrt{\Bigp{1+\dfrac{1}{A_k}}\Bigp{q+\dfrac{1}{A_k}}}
        \xrightarrow[k\to\infty]{}
        \sqrt{q}.
    \end{equation*}
    This proves \eqref{eq:prox-tmm:limit-sigma-self-ratio}.
    For \(k\in\N\), dividing \eqref{eq:prox-item:A-recursion} by \(A_k\)
    gives
    \begin{equation*}
        \frac{A_{k+1}}{A_k}
        =
        \frac{1+q+2\p{1+\sigma_k}/A_k}{\p{1-q}^2}
        \xrightarrow[k\to\infty]{}
        \frac{\p{1+\sqrt{q}}^2}{\p{1-q}^2}
        =
        \frac{1}{\p{1-\sqrt{q}}^2}.
    \end{equation*}
    Taking reciprocals gives \eqref{eq:prox-tmm:limit-A-ratio}.

    The remaining limits follow directly from the definitions:
    \begin{align*}
        \beta_k
        &=
        \frac{1}{1-q}\frac{A_k}{A_{k+1}}
        \xrightarrow[k\to\infty]{}
        \frac{1-\sqrt{q}}{1+\sqrt{q}}, \\
        \delta_k
        &=
        \sqrt{\frac{1}{q+1/A_{k+1}}}
        \xrightarrow[k\to\infty]{}
        \frac{1}{\sqrt{q}}, \\
        \frac{\sigma_k}{A_{k+1}}
        &=
        \frac{\sigma_k}{A_k}\frac{A_k}{A_{k+1}}
        \xrightarrow[k\to\infty]{}
        \sqrt{q}\p{1-\sqrt{q}}^{2}.
    \end{align*}
    These are \eqref{eq:prox-tmm:limit-beta},
    \eqref{eq:prox-tmm:limit-delta}, and
    \eqref{eq:prox-tmm:limit-sigma-ratio}.
\end{proof}

\begin{remark}
    The limits \eqref{eq:prox-tmm:limit-beta} and \eqref{eq:prox-tmm:limit-delta} show that \refProxTMM{} is the stationary limit of \refProxITEM{}.
    Indeed, substituting the limiting values of \(\beta_k\) and \(\delta_k\) into \eqref{eq:prox-item:extrapolation}, \eqref{eq:prox-item:prox-point}, \eqref{eq:prox-item:z-update}, and \eqref{eq:prox-item:x-update} gives the update rules of \refProxTMM{}\@.
\end{remark}

\begin{definition}[Lyapunov function]\label{def:prox-tmm-lyapunov}
    {\sloppy
    Suppose that \Cref{ass:main} holds.
    Let the iterates be generated by \refProxTMM{}\@.
    Let~\(x^{\star}\in\argmin_{x\in\calH}\allowbreak\p{f\p{x} + g\p{x}}\), and let \(q=\mu/L\).
    Define
    \par}
    \begin{equation}\label{eq:prox-tmm:lyapunov}
        \begin{aligned}
            \p{\forall k\in\N}\quad
            \mathcal{V}_k^{\infty}
            ={}&{}
            \p{1-q}\mathcal{I}_f\p{y^{k-1},x^{\star}}
            +q\mathcal{I}_g\p{x^{\star},z^k,s_g^k} \\
            &{}+
            \frac{1}{2L}\norm{s_g^k-s_g^{\star}}^2
            +
            \mu\norm{z^k-x^{\star}}^2,
        \end{aligned}
    \end{equation}
    where
    \begin{equation}\label{eq:prox-tmm:sampled-subgradient}
        \p{\forall k\in\N}\quad
        s_g^k
        =
        \sqrt{q}L\p{\bar z^k-z^k}
        \in
        \partial g\p{z^k}.
    \end{equation}
    The inclusion in \eqref{eq:prox-tmm:sampled-subgradient} follows from \eqref{eq:prox-tmm:z-update} and \eqref{eq:prox_characterization}, and \(s_g^{\star}\) is defined in \eqref{eq:prox-item:optimal-subgradient}.
\end{definition}

\begin{remark}
    \begin{enumeratremark}
        \item\label{rem:prox-tmm-lyapunov:stationary-limit} The formula for the
        stationary quantity \(\mathcal{V}_k^{\infty}\) in
        \eqref{eq:prox-tmm:lyapunov} is obtained coefficientwise by dividing
        the nonstationary quantity \(\mathcal{V}_k\) in
        \eqref{eq:prox-item:lyapunov} by \(A_k\) for \(k\in\N\) and then
        replacing only the scalar coefficients, not the iterates or residual
        arguments, by their limits as \(k\to\infty\), using
        \Cref{lem:prox-tmm-limiting-coefficients}.
        Indeed,
        \begin{equation*}
            \begin{aligned}
                \frac{\mathcal{V}_k}{A_k}
                ={}&{}
                \p{1-q}\mathcal{I}_f\p{y^{k-1},x^{\star}}
                +q\mathcal{I}_g\p{x^{\star},z^{k},s_g^{k}} \\
                &{}+
                \frac{1}{2L}\norm{s_g^k-s_g^{\star}}^2
                +
                \underbracket{\frac{L+\mu A_k}{A_k}}_{\xrightarrow[k\to\infty]{}\mu}\norm{z^{k}-x^{\star}}^2.
            \end{aligned}
        \end{equation*}
        \item\label{rem:prox-tmm-lyapunov:real-valued} The interpolation inequalities
        \eqref{eq:smooth_interpolation_inequality} and
        \eqref{eq:convex_interpolation_inequality:nonnegative}, the inclusions
        \eqref{eq:prox-tmm:sampled-subgradient} and
        \eqref{eq:prox-item:optimal-subgradient}, and \(q\in\p{0,1}\) imply
        that each term on the right-hand side of
        \eqref{eq:prox-tmm:lyapunov} is nonnegative. Thus,
        \(\mathcal{V}_k^{\infty}\in\reals_+\) for each \(k\in\N\).
        \item\label{rem:prox-tmm-distance-estimate} In particular, retaining only the last term in \eqref{eq:prox-tmm:lyapunov} gives
        \begin{equation*}
            \p{\forall k\in\N}\quad
            \mu\norm{z^k-x^{\star}}^2
            \leq
            \mathcal{V}_k^{\infty}.
        \end{equation*}
    \end{enumeratremark}
\end{remark}

Inspired by \eqref{eq:prox-item:one-step-lyapunov} in
\Cref{lem:prox-item-one-step-lyapunov}, we divide the scalar weights in
\eqref{eq:prox-item:one-step-lyapunov-remainder} by \(A_{k+1}\) and pass to
the coefficientwise limit using
\Cref{lem:prox-tmm-limiting-coefficients}, as in
\Cref{rem:prox-tmm-lyapunov:stationary-limit}. This gives the stationary
Lyapunov equality below.

\begin{lemma}[Lyapunov equality]\label{lem:prox-tmm-one-step-lyapunov}
    \sloppy
    Suppose that \Cref{ass:main} holds.
    Let~\(\p{\mathcal{V}_k^{\infty}}_{k\in\N}\) be as in
    \Cref{def:prox-tmm-lyapunov}.
    Then
    \begin{equation}\label{eq:prox-tmm:one-step-lyapunov}
        \p{\forall k\in\N}\quad
        \mathcal{V}_{k+1}^{\infty}
        =
        \p{1-\sqrt{q}}^{2}\mathcal{V}_k^{\infty}
        -\mathcal{R}_k^{\infty},
    \end{equation}
    where
    \begin{align}
        \p{\forall k\in\N}\quad
        \mathcal{R}_k^{\infty}
        ={}&{}
        \frac{1-\p{1-\sqrt{q}}^{2}}{2L}
        \norm{s_g^{k+1}-s_g^{\star}}^2
        +\frac{\p{1-\sqrt{q}}^{2}}{2L}
        \norm{s_g^k-s_g^{k+1}}^2 \notag\\
        {}&{}+
        \p{1-q}\p{1-\p{1-\sqrt{q}}^{2}}
        \mathcal{I}_f\p{x^{\star},y^k} \notag\\
        {}&{}+
        \p{1-q}\p{1-\sqrt{q}}^{2}
        \mathcal{I}_f\p{y^{k-1},y^k} \notag\\
        {}&{}+
        \p{1-\p{1-\sqrt{q}}^{2}}
        \mathcal{I}_g\p{x^{\star},z^{k+1},s_g^{k+1}} \notag\\
        {}&{}+
        \sqrt{q}\p{1-\sqrt{q}}^{3}
        \mathcal{I}_g\p{z^k,x^{\star},s_g^{\star}} \notag\\
        {}&{}+
        \sqrt{q}\p{2-\p{1-\sqrt{q}}^{2}}
        \mathcal{I}_g\p{z^{k+1},x^{\star},s_g^{\star}} \notag\\
        {}&{}+
        \sqrt{q}\p{1-\sqrt{q}}^{2}
        \mathcal{I}_g\p{z^{k+1},z^k,s_g^k}.
        \label{eq:prox-tmm:one-step-lyapunov-remainder}
    \end{align}
    Moreover, \(\mathcal{R}_k^{\infty}\in\reals_+\) for each \(k\in\N\).
\end{lemma}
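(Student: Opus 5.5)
The proof follows the same template as \Cref{lem:prox-item-one-step-lyapunov}. It has two parts: nonnegativity of \(\mathcal{R}_k^{\infty}\), then the identity.

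\textbf{Nonnegativity.} Since \(q\in\p{0,1}\), every scalar weight in \eqref{eq:prox-tmm:one-step-lyapunov-remainder} is nonnegative: \(1-\p{1-\sqrt{q}}^2=\sqrt{q}\p{2-\sqrt{q}}>0\), \(2-\p{1-\sqrt{q}}^2>0\), and the remaining weights are products of positive factors. Each \(\mathcal{I}_f\)-term is nonnegative by \eqref{eq:smooth_interpolation_inequality}. Each \(\mathcal{I}_g\)-term is nonnegative by \eqref{eq:convex_interpolation_inequality:nonnegative}, using \(s_g^k\in\partial g\p{z^k}\) from \eqref{eq:prox-tmm:sampled-subgradient} and \(s_g^\star\in\partial g\p{x^\star}\). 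Here \(k\in\N\), so no conventions at \(k=0\) are needed. Hence \(\mathcal{R}_k^\infty\in\reals_+\).

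\textbf{The identity.} Define the slack \(\mathcal{S}_k^\infty=\mathcal{V}_{k+1}^\infty-\p{1-\sqrt{q}}^2\mathcal{V}_k^\infty+\mathcal{R}_k^\infty\); the goal is \(\mathcal{S}_k^\infty=0\).

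First, the function values cancel. With \(\rho=\p{1-\sqrt{q}}^2\):
\begin{itemize}
  \item Coefficient of \(f\p{y^k}\): \(\p{1-q}\p{1-\p{1-\rho}-\rho}=0\). The coefficients of \(f\p{y^{k-1}}\) and \(f\p{x^\star}\) cancel in the same way.
  \item Coefficient of \(g\p{z^k}\): \(\rho q+\sqrt{q}\p{1-\sqrt{q}}^3-\sqrt{q}\rho=\sqrt{q}\rho\p{\sqrt{q}+1-\sqrt{q}-1}=0\).
  \item The coefficients of \(g\p{z^{k+1}}\) and \(g\p{x^\star}\) cancel by analogous elementary identities in \(\sqrt{q}\). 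These should be checked by hand; they are exactly the limits of the corresponding cancellations in \Cref{lem:prox-item-one-step-lyapunov} after dividing by \(A_{k+1}\).
\end{itemize}

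Second, the remaining expression is a quadratic form in the vectors \(z^k\), \(x^k\), \(z^{k+1}\), \(x^\star\), \(\nabla f\p{y^k}\), \(\nabla f\p{y^{k-1}}\), \(\nabla f\p{x^\star}\), \(s_g^k\), \(s_g^{k+1}\). Into it I substitute:
\begin{itemize}
  \item \(y^k=\frac{2\sqrt q}{1+\sqrt q}z^k+\frac{1-\sqrt q}{1+\sqrt q}x^k\);
  \item \(\nabla f\p{y^k}=\sqrt{q}L\p{\p{1-\sqrt q}z^k+\sqrt q y^k-\bar z^{k+1}}\), with \(\bar z^{k+1}=z^{k+1}+s_g^{k+1}/\p{\sqrt q L}\);
  \item \(s_g^\star=-\nabla f\p{x^\star}\);
  \item the previous step's \(x^k=y^{k-1}-\frac1L\nabla f\p{y^{k-1}}-\frac{1}{\sqrt q L}s_g^k\), which expresses \(y^{k-1}\) in terms of \(x^k\), \(\nabla f\p{y^{k-1}}\), and \(s_g^k\).
\end{itemize}
This last substitution is why \(k\in\N\) is required: the relation is only available once a previous iteration exists.

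After these substitutions, I would mirror the structure found for ITEM. I expect \(\mathcal{S}_k^\infty\) to be exactly the limit of \(\mathcal{S}_k/A_{k+1}\), namely
\[
\tilde p_0\inner{z^k-x^k}{\cdot}+\inner{z^k-z^{k+1}}{\tilde\eta^k},
\]
whose scalar coefficients \(\tilde p_i\) are the limits of \(p_i/A_{k+1}\) (suitably normalized) computed via \Cref{lem:prox-tmm-limiting-coefficients}. Since \(p_i=0\) for every \(k\), these limits vanish. For instance, \(p_0/A_{k+1}=A_k/A_{k+1}-\p{1-q}\beta_k\to\rho-\p{1-q}\frac{1-\sqrt q}{1+\sqrt q}=0\). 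Similarly, \(p_1/A_{k+1}\to\frac1{\sqrt q}\p{\rho-1-q}+2=0\).

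Note that I cannot literally take limits of the identity, because the iterates of \ProxTMM{} differ from those of \ProxITEM{}. The argument is instead that the slack is a polynomial expression in the scalar coefficients with the iterate-dependent vectors held fixed. It is homogeneous after division by \(A_{k+1}\), up to terms such as \(L/A_{k+1}\) multiplying \(\norm{z-x^\star}^2\) that vanish in the limit. Hence the stationary coefficients satisfy the limiting polynomial identities.

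\textbf{Main obstacle.} The hard step is making this limiting argument rigorous, or bypassing it. The safest route is a direct symbolic expansion: collect the coefficient of each inner-product monomial in the substituted vectors and verify that every one is zero as a polynomial in \(\sqrt{q}\). I would attempt the direct verification first, using the companion notebooks as a check, and present the limit heuristic only as the explanation of where the weights come from.
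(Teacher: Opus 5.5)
Your plan is the paper's proof. Nonnegativity of $\mathcal{R}_k^{\infty}$ follows from the signs of the weights and the interpolation inequalities, all function values in $\mathcal{S}_k^{\infty}$ cancel (your computations are right, and the two you leave open also check out), and substituting the \ProxTMM{} updates should then reduce $\mathcal{S}_k^{\infty}$ to $0$ by direct simplification, a step the paper likewise delegates to its symbolic notebooks.

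However, one of the substitutions you list is wrong, and with it that expansion will not vanish. Since $s_g^k=\sqrt{q}L(\bar z^k-z^k)$, the correction term in the $x$-update is $\sqrt{q}(\bar z^k-z^k)=s_g^k/L$. So the correct relation is $x^k=y^{k-1}-\frac{1}{L}\bigl(\nabla f(y^{k-1})+s_g^k\bigr)$, not $x^k=y^{k-1}-\frac{1}{L}\nabla f(y^{k-1})-\frac{1}{\sqrt{q}L}s_g^k$. Your last term equals $\bar z^k-z^k$, i.e., the factor $\sqrt{q}$ was dropped.

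This is not cosmetic. Write $\rho=(1-\sqrt{q})^2$ as you do. The vectors $y^{k-1}$ and $\nabla f(y^{k-1})$ enter $\mathcal{S}_k^{\infty}$ only through $(1-q)\rho\bigl(\mathcal{I}_f(y^{k-1},y^k)-\mathcal{I}_f(y^{k-1},x^{\star})\bigr)$. There the function values and the terms quadratic in $(y^{k-1},\nabla f(y^{k-1}))$ cancel, leaving $-\rho\bigl\langle y^{k-1}-\frac{1}{L}\nabla f(y^{k-1}),\,w^k\bigr\rangle$ with $w^k=\nabla f(y^k)-\nabla f(x^{\star})-\mu(y^k-x^{\star})$. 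Your substitution therefore leaves the residual $-\frac{(1-\sqrt{q})^3}{\sqrt{q}L}\langle s_g^k,w^k\rangle$. This is not identically zero: after your substitution for $\nabla f(y^k)$, $w^k$ contains $s_g^{\star}-s_g^{k+1}$. It does vanish when $g=0$, so a smooth sanity check would not reveal the error.

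With the corrected relation, your verification goes through as in the paper. This is the same $\delta$-independent identity $y^{k-1}-\frac{1}{L}\nabla f(y^{k-1})=x^k+\frac{1}{L}s_g^k$ that holds for \ProxITEM{}.

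Incidentally, the limiting argument you flag as the main obstacle is rigorous once phrased at the level of formal identities. The proof of \Cref{lem:prox-item-one-step-lyapunov} gives $\mathcal{S}_k=p_0\langle z^k-x^k,\cdot\rangle+\langle z^k-z^{k+1},\eta^k\rangle$ as an identity in free vectors, with the scalars left unevaluated. Hence every Gram coefficient of $\mathcal{S}_k/A_{k+1}$ is a continuous function of $(A_k/A_{k+1},1/A_{k+1},\sigma_k/A_{k+1},\beta_k,\delta_k)$ that vanishes at the \ProxITEM{} scalars for each $k\in\N$. By \Cref{lem:prox-tmm-limiting-coefficients}, these coefficients converge to the corresponding coefficients of $\mathcal{S}_k^{\infty}$, which must therefore vanish. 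That route reuses the \ProxITEM{} computation instead of a second symbolic expansion, whereas the paper performs the stationary expansion directly.
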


\begin{proof}
    First, the fact that \(\mathcal{R}_k^{\infty}\in\reals_+\) for each
    \(k\in\N\) follows directly from
    \eqref{eq:prox-tmm:one-step-lyapunov-remainder}, \(q\in\p{0,1}\), the
    interpolation inequalities \eqref{eq:smooth_interpolation_inequality} and
    \eqref{eq:convex_interpolation_inequality:nonnegative}, and the inclusions
    \eqref{eq:prox-tmm:sampled-subgradient} and
    \eqref{eq:prox-item:optimal-subgradient}.

    Next, define
    \begin{equation}\label{eq:prox-tmm:one-step-lyapunov-slack-definition}
        \p{\forall k\in\N}\quad
        \mathcal{S}_k^{\infty}
        =
        \mathcal{V}_{k+1}^{\infty}
        -\p{1-\sqrt{q}}^{2}\mathcal{V}_k^{\infty}
        +\mathcal{R}_k^{\infty};
    \end{equation}
    it remains to show that \(\mathcal{S}_k^{\infty}=0\).

    Substituting \eqref{eq:prox-tmm:lyapunov} and
    \eqref{eq:prox-tmm:one-step-lyapunov-remainder} into
    \(\mathcal{S}_k^{\infty}\), and using
    \eqref{eq:smooth_interpolation_inequality}, the coefficients of the
    function values in the \(\mathcal{I}_f\)-terms are
    \begin{align*}
        f\p{y^{k-1}}:
        {}&{}
        -\p{1-q}\p{1-\sqrt{q}}^{2}
        +\p{1-q}\p{1-\sqrt{q}}^{2}=0, \\
        f\p{y^k}:
        {}&{}
        \p{1-q}
        -\p{1-q}\p{1-\p{1-\sqrt{q}}^{2}}
        -\p{1-q}\p{1-\sqrt{q}}^{2}=0, \\
        f\p{x^{\star}}:
        {}&{}
        -\p{1-q}
        +\p{1-q}\p{1-\sqrt{q}}^{2}
        +\p{1-q}\p{1-\p{1-\sqrt{q}}^{2}}=0.
    \end{align*}
    Similarly, using \eqref{eq:convex_interpolation_inequality}, the coefficients
    of the function values in the \(\mathcal{I}_g\)-terms are
    \begin{align*}
        g\p{x^{\star}}:
        {}&{}
        q-q\p{1-\sqrt{q}}^{2}
        +1-\p{1-\sqrt{q}}^{2}
        -\sqrt{q}\p{1-\sqrt{q}}^{3}
        -\sqrt{q}\p{2-\p{1-\sqrt{q}}^{2}}=0, \\
        g\p{z^{k+1}}:
        {}&{}
        -q-\p{1-\p{1-\sqrt{q}}^{2}}
        +\sqrt{q}\p{2-\p{1-\sqrt{q}}^{2}}
        +\sqrt{q}\p{1-\sqrt{q}}^{2}=0, \\
        g\p{z^k}:
        {}&{}
        q\p{1-\sqrt{q}}^{2}
        +\sqrt{q}\p{1-\sqrt{q}}^{3}
        -\sqrt{q}\p{1-\sqrt{q}}^{2}=0.
    \end{align*}
    Thus, all function values in \(\mathcal{S}_k^{\infty}\) cancel.

    Expanding the remaining parts of the interpolation inequalities gives
    \begin{align}
        \mathcal{S}_{k}^{\infty}
        ={}&{}
        \frac{1-\p{1-\sqrt{q}}^{2}}{2L}
        \norm{s_g^{k+1}-s_g^{\star}}^2
        +\frac{\p{1-\sqrt{q}}^{2}}{2L}
        \norm{s_g^k-s_g^{k+1}}^2 \notag\\
        {}&{}
        -\p{1-q}\p{1-\p{1-\sqrt{q}}^{2}}\p{
            \inner{\nabla f\p{y^k}}{x^{\star}-y^k}
            +\frac{\mu}{2}\norm{x^{\star}-y^k}^{2}
        } \notag\\
        {}&{}
        -\frac{\p{1-q}\p{1-\p{1-\sqrt{q}}^{2}}}{2\p{L-\mu}}
        \norm{\nabla f\p{x^{\star}}-\nabla f\p{y^k}-\mu\p{x^{\star}-y^k}}^2 \notag\\
        {}&{}
        -\p{1-q}\p{1-\sqrt{q}}^{2}\p{
            \inner{\nabla f\p{y^k}}{y^{k-1}-y^k}
            +\frac{\mu}{2}\norm{y^{k-1}-y^k}^{2}
        } \notag\\
        {}&{}
        -\frac{\p{1-q}\p{1-\sqrt{q}}^{2}}{2\p{L-\mu}}
        \norm{\nabla f\p{y^{k-1}}-\nabla f\p{y^k}-\mu\p{y^{k-1}-y^k}}^2 \notag\\
        {}&{}
        -\p{1-q}\p{
            \inner{\nabla f\p{x^{\star}}}{y^k-x^{\star}}
            +\frac{\mu}{2}\norm{y^k-x^{\star}}^{2}
        } \notag\\
        {}&{}
        -\frac{\p{1-q}}{2\p{L-\mu}}
        \norm{\nabla f\p{y^k}-\nabla f\p{x^{\star}}-\mu\p{y^k-x^{\star}}}^2 \notag\\
        {}&{}
        +\p{1-q}\p{1-\sqrt{q}}^{2}\p{
            \inner{\nabla f\p{x^{\star}}}{y^{k-1}-x^{\star}}
            +\frac{\mu}{2}\norm{y^{k-1}-x^{\star}}^{2}
        } \notag\\
        {}&{}
        +\frac{\p{1-q}\p{1-\sqrt{q}}^{2}}{2\p{L-\mu}}
        \norm{\nabla f\p{y^{k-1}}-\nabla f\p{x^{\star}}-\mu\p{y^{k-1}-x^{\star}}}^2 \notag\\
        {}&{}
        -\p{1+q-\p{1-\sqrt{q}}^{2}}\inner{s_g^{k+1}}{x^{\star}-z^{k+1}}
        +q\p{1-\sqrt{q}}^{2}\inner{s_g^k}{x^{\star}-z^k} \notag\\
        {}&{}
        -\sqrt{q}\p{1-\sqrt{q}}^{3}
        \inner{s_g^{\star}}{z^k-x^{\star}} \notag\\
        {}&{}
        -\sqrt{q}\p{2-\p{1-\sqrt{q}}^{2}}
        \inner{s_g^{\star}}{z^{k+1}-x^{\star}} \notag\\
        {}&{}
        -\sqrt{q}\p{1-\sqrt{q}}^{2}\inner{s_g^k}{z^{k+1}-z^k}
        -\frac{\p{1-\sqrt{q}}^{2}}{2L}\norm{s_g^k-s_g^{\star}}^2
        +\frac{1}{2L}\norm{s_g^{k+1}-s_g^{\star}}^2 \notag\\
        {}&{}
        +\mu\norm{z^{k+1}-x^{\star}}^2
        -\mu\p{1-\sqrt{q}}^{2}\norm{z^k-x^{\star}}^2.
        \label{eq:prox-tmm:one-step-lyapunov-raw-slack}
    \end{align}
    Using \(q=\mu/L\) and the update rules of \refProxTMM{}, together with
    \eqref{eq:prox-tmm:sampled-subgradient} and
    \eqref{eq:prox-item:optimal-subgradient}, we obtain from
    \eqref{eq:prox-tmm:one-step-lyapunov-raw-slack} after direct
    simplification\footnote{\raggedright Symbolic verification notebooks are available at
    \href{https://github.com/PerformanceEstimation/ProxITEM}{\nolinkurl{https://github.com/PerformanceEstimation/}}\allowbreak\href{https://github.com/PerformanceEstimation/ProxITEM}{\mbox{\nolinkurl{ProxITEM}}}.\par}
    \begin{equation}
        \p{\forall k\in\N}\quad
        \mathcal{S}_{k}^{\infty}=0.
        \label{eq:prox-tmm:one-step-lyapunov-slack-zero}
    \end{equation}
    Combining \eqref{eq:prox-tmm:one-step-lyapunov-slack-definition} with
    \eqref{eq:prox-tmm:one-step-lyapunov-slack-zero} gives
    \eqref{eq:prox-tmm:one-step-lyapunov}.
\end{proof}


\section{Proof of \texorpdfstring{\Cref{thm:main:prox_item}}{the main Prox-ITEM theorem}}\label{sec:proof_main_prox_item}
The case \(k=0\) holds trivially, since \eqref{eq:main:prox_item} reduces~to
\begin{align*}
    \norm{x^0-x^{\star}}^{2}\leq \norm{x^0-x^{\star}}^{2},
\end{align*}
where we have used that \(A_0 = 0\) and \(x^0 = z^0\).

Assume now that \(k\geq1\). By \Cref{lem:prox-item-one-step-lyapunov}, induction gives
\begin{equation}\label{eq:proof-main-prox-item:energy-recursion}
    \mathcal{V}_{k}
    \leq
    \mathcal{V}_0.
\end{equation}
Combining \Cref{rem:prox-item-initial-lyapunov,rem:prox-item-distance-estimate} with \eqref{eq:proof-main-prox-item:energy-recursion} gives
\begin{equation}\label{eq:proof-main-prox-item:distance-bound}
    \p{L+\mu A_k}\norm{z^k-x^{\star}}^2
    \leq
    \mathcal{V}_{k}
    \leq
    \mathcal{V}_0
    =
    L\norm{x^0-x^{\star}}^2.
\end{equation}
Since \(q=\mu/L\), we have \(L+\mu A_k=L\p{1+qA_k}\).
Thus, dividing \eqref{eq:proof-main-prox-item:distance-bound} by \(L\p{1+qA_k}\) proves \eqref{eq:main:prox_item}.\nobreak\hfill\quad\(\square\)


\section{Proof of \texorpdfstring{\Cref{thm:main:prox_item_is_optimal}}{the optimality of Prox-ITEM}}\label{sec:proof_main_prox_item_is_optimal}

Let \(\mathsf{A}_{\text{\ProxITEM},k}\) denote \refProxITEM{} run for \(k\in\naturals\)
iterations with input \(x^0=u^0\) and output \(z^k\). Then
\(\mathsf{A}_{\text{\ProxITEM},k}\in\mathcal{A}_{k,\mu,L}\), as follows from
the update rule \eqref{eq:span-based-proximal-method-update} under the
identification
\[
    \begin{gathered}
        m=2k,\qquad u^k=z^k,\\[-0.35\baselineskip]
        \p{\forall \ell\in\llbracket0,k-1\rrbracket},\\[-0.35\baselineskip]
        \begin{alignedat}{4}
            \theta_{2\ell} &= 1,\qquad&
            \theta_{2\ell+1} &= 0,\qquad&
            v^{2\ell} &= y^\ell,\qquad&
            v^{2\ell+1} &= \bar z^{\ell+1},
            \\
            s^{2\ell} &= \nabla f\p{y^\ell},\qquad&
            s^{2\ell+1} &= \frac{L}{\delta_\ell}\p{\bar z^{\ell+1}-z^{\ell+1}},\qquad&
            \gamma_{2\ell+1} &= \frac{\delta_\ell}{L}.
        \end{alignedat}
    \end{gathered}
\]

By \Cref{thm:main:prox_item}, \(\mathsf{A}_{\text{\ProxITEM},k}\) satisfies
\begin{equation}\label{eq:proof-main-prox-item-is-optimal:upper-bound}
    \mathcal{R}_{\mu,L}\p{\mathsf{A}_{\text{\ProxITEM},k}}
    \leq
    \frac{1}{1+qA_k}.
\end{equation}
This proves that the left-hand side of \eqref{eq:main:prox_item_is_optimal} is
no larger than the right-hand side.

For the reverse inequality, restrict the supremum in
\eqref{eq:fixed-step-proximal-method-risk} to instances with \(g=0\). Then all
proximal residuals are zero, and \(\mathcal{A}_{k,\mu,L}\) reduces to a
black-box first-order method using \(k\) gradient evaluations of a function in
\(\mathcal{F}_{\mu,L}\p{\calH}\). The~lower~bound~for smooth and strongly convex functions from
\cite[Corollary~4]{drori2021exact}, together with
\cite[Section~2.3]{taylor2023optimal_gradient}, gives,\nolinebreak[4]\ for each
\(\mathsf A\in\mathcal{A}_{k,\mu,L}\),
\begin{equation}\label{eq:proof-main-prox-item-is-optimal:lower-bound}
    \mathcal{R}_{\mu,L}\p{\mathsf A}
    \geq
    \frac{1}{1+qA_k}.
\end{equation}
Taking the infimum over \(\mathsf A\in\mathcal{A}_{k,\mu,L}\) in
\eqref{eq:proof-main-prox-item-is-optimal:lower-bound} and combining it with
\eqref{eq:proof-main-prox-item-is-optimal:upper-bound} proves
\eqref{eq:main:prox_item_is_optimal}.\nobreak\hfill\quad\(\square\)


\section{Proof of \texorpdfstring{\Cref{thm:main:prox_tmm}}{the main Prox-TMM theorem}}\label{sec:proof_main_prox_tmm}

By \Cref{lem:prox-tmm-one-step-lyapunov}, induction gives
\begin{equation}\label{eq:proof-main-prox-tmm:energy-bound}
    \p{\forall k\in\N}\quad
    \mathcal{V}_k^{\infty}
    \leq
    \p{1-\sqrt{q}}^{2\p{k-1}}
    \mathcal{V}_1^{\infty}.
\end{equation}
Combining \eqref{eq:proof-main-prox-tmm:energy-bound}
with \Cref{rem:prox-tmm-distance-estimate} gives
\begin{equation}\label{eq:proof-main-prox-tmm:distance-bound}
    \p{\forall k\in\N}\quad
    \mu\norm{z^k-x^{\star}}^2
    \leq
    \p{1-\sqrt{q}}^{2\p{k-1}}
    \mathcal{V}_1^{\infty}.
\end{equation}
By the definition of \(c\) in \Cref{thm:main:prox_tmm} and
\eqref{eq:prox-tmm:lyapunov}, \(c=\mu^{-1}\mathcal{V}_1^{\infty}\).
Dividing \eqref{eq:proof-main-prox-tmm:distance-bound} by~\(\mu\)~gives
\begin{equation*}
    \p{\forall k\in\N}\quad
    \norm{z^k-x^{\star}}^2
    \leq
    c\p{1-\sqrt{q}}^{2\p{k-1}},
\end{equation*}
as claimed.\nobreak\hfill\quad\(\square\)


\section{Conclusion}\label{sec:conclusion}

{\sloppy
We established that the exact last-iterate distance guarantee of ITEM for smooth, strongly convex minimization extends to the composite forward-backward oracle model.
The proposed method, \ProxITEM{}, attains the factor \(\p{1+\allowbreak qA_k}^{-1}\) for the squared distance to the minimizer, and this factor is optimal among span-based methods using \(k\) gradient-\allowbreak oracle calls for \(f\) and an arbitrary number of proximal-\allowbreak oracle calls for \(g\).
We also identified the stationary limit, \ProxTMM{}, and proved that it preserves the linear distance rate of the triple momentum method in the composite setting.
\par}

Beyond the algorithms themselves, the analysis developed here yields concise, human-verifiable proofs using Lyapunov functions.
The methods were first identified by solving long-horizon performance-estimation problems. These problems were formulated as semidefinite programs whose size grows with the time horizon \(k\); see, e.g.,~\cite{goujaud2024pepit} and the accompanying \href{https://github.com/PerformanceEstimation/ProxITEM}{GitHub repository}.
Converting the resulting SDP certificates into the compact proofs based on Lyapunov functions presented here required substantial simplification of our initial proofs, which were closer in spirit to, e.g.,~\cite{bok_altschuler_2025_composite_reduction,jang2025optista,kim_fessler_2016_ogm,kim_fessler_2018_generalizing_ogm}.
A natural next step is to understand the structure of such optimal certificates more systematically, with the goal of identifying more interpretable Lyapunov functions that can be carried beyond the forward-backward model considered here.
This perspective may offer a route to designing optimal methods, together with short proofs of their worst-case guarantees, in broader composite settings such as primal-dual, multi-operator, and linearly composite problems.


\section*{Declarations}
\phantomsection\label{sec:declarations}

\subsection*{Funding}
\phantomsection\label{subsec:funding}
M. Upadhyaya, D. B. Thomsen, and A. B. Taylor are supported by the European Union (ERC grant CASPER 101162889). This work was also partially funded by the French government, through the Agence Nationale de la Recherche, as part of the ``France 2030'' program under grant references ANR-23-IACL-0008 (``PR[AI]RIE-PSAI''), ANR-23-PEIA-0005 (``REDEEM''), and ANR-23-IACL-0005 (``Hi! PARIS''). The views and opinions expressed are those of the authors only and do not necessarily reflect those of the funding agencies or granting authorities, which cannot be held responsible for them.

\subsection*{Use of generative AI}
\phantomsection\label{subsec:use_of_generative_ai}
{\raggedright
The authors used large-language-model (LLM) tools, including ChatGPT and Codex, to assist with writing, copyediting, and exploratory coding.
These tools were also used in connection with exploratory full-\allowbreak horizon performance-\allowbreak estimation computations that helped generate initial candidate methods and proofs. The final Lyapunov proofs presented in this paper were derived and verified by the authors and substantially simplify the initial LLM proposals. The authors take full responsibility for all content.
\par}

\subsection*{Competing interests}
\phantomsection\label{subsec:conflict_of_interest}
The authors declare no relevant financial or non-financial interests to disclose.

\subsection*{Code availability}
\phantomsection\label{subsec:code_availability}
Numerical and symbolic verification notebooks accompanying this paper are available at
\href{https://github.com/PerformanceEstimation/ProxITEM}{\nolinkurl{https://github.com/PerformanceEstimation/}}%
\ifsiopt\\\else\allowbreak\fi
\href{https://github.com/PerformanceEstimation/ProxITEM}{\mbox{\nolinkurl{ProxITEM}}}.

\subsection*{Data availability}
\phantomsection\label{subsec:data_availability}
No datasets were generated or analyzed for this paper.

\bibliographystyle{abbrvnat}
\bibliography{bib}

\end{document}